\swapnumbers \numberwithin{equation}{section}
\theoremstyle{plain}
\newtheorem{thm}{Theorem}[section]
\newtheorem{prop}[thm]{Proposition}
\newtheorem{cor}[thm]{Corollary}
\theoremstyle{definition}
\newtheorem{defin}[thm]{Definition}
\newtheorem{question}[thm]{Question}
 \newcommand{\Wi}{\widetilde}
\DeclareMathOperator{\as}{{\rm asdim}}
\def\Z{{\mathbb Z}}
\def\1{\hbox{\rm\rlap {1}\hskip.03in{\rom I}}}
\def\Bbbone{{\rm1\mathchoice{\kern-0.25em}{\kern-0.25em}
{\kern-0.2em}{\kern-0.2em}I}}
\long\def\forget#1\forgotten{} %
\newcommand\ver[1]{\marginpar{\tiny Changed in Ver \VER}}
\date{\today}
\begin{document}

\title[macroscopic dimension]{On macroscopic dimension of universal coverings of closed manifolds}

\author[A.~Dranishnikov]{Alexander  Dranishnikov} %

\address{Alexander N. Dranishnikov, Department of Mathematics, University
of Florida, 358 Little Hall, Gainesville, FL 32611-8105, USA \\
and Steklov Mathematical Institute, Moscow, Russia}
\email{dranish@math.ufl.edu}

\subjclass[2000]
{Primary 55M30; 
Secondary 53C23,  
57N65  
}

\keywords{}

\begin{abstract}
We give a homological characterization of $n$-manifolds whose universal covering $\Wi M$ has Gromov's macroscopic dimension
$\dim_{mc}\Wi M<n$. As the result we distinguish $\dim_{mc}$ from the macroscopic dimension $\dim_{MC}$ defined by the author~\cite{Dr}. 
We prove the inequality $\dim_{mc}\Wi M<\dim_{MC}\Wi M=n$
for every closed $n$-manifold $M$ whose fundamental group $\pi$ is a geometrically finite amenable duality group
with  the cohomological dimension $cd(\pi)> n$.
\end{abstract}

\maketitle \tableofcontents

\section {Introduction}

Gromov introduced the macroscopic dimension $\dim_{mc}$ to study manifolds with positive scalar curvature~\cite{Gr1}.
He noticed that the universal covering $\Wi M$ of a Riemannian $n$-manifold with positive scalar curvature is dimensionally deficient on large scales.
Gromov formulated his observation as the conjecture: {\em For the universal covering $\Wi M$ of an $n$-manifold with positive scalar curvature}
$$\dim_{mc}\Wi M<n-1. $$  
The macroscopic dimension $\dim_{mc}$ was defined as follows
\begin{defin}[\cite{Gr1}] For a Riemannian manifold $X$ its macroscopic dimension does not exceed $n$,  $\dim_{mc}X\le n$, if there is a continuous map $g:X\to K^n$ to an $n$-dimensional simplicial complex with a uniform upper bound on the size of the preimages: $diam (g^{-1}(y))<b$ for all $y\in K^n$.

Gromov called such maps {\em uniformly cobounded}.
\end{defin}
D. Bolotov proved this conjecture for 3-manifolds~\cite{Bol1}.
In~\cite{BD} we proved Gromov's conjecture for spin manifolds with the fundamental groups $\pi$ satisfying the Analytic Novikov conjecture and the injectivity condition of the natural transformation $ko_*(B\pi)\to KO(B\pi)$ of the real connective 
$K$-theory to the periodic one.

The definition of $\dim_{mc} X$ is good for any metric space $X$. This concept differs drastically from the notion of the asymptotic dimension $\as X$
also defined by Gromov for all metric spaces. The former is an instrument to study open Riemannian manifolds and the later is the best
for finitely generated groups taken with the word metric. We note that there is the inequality $\dim_{mc}X\le\as X$ for all metric spaces.

We call an $n$-manifold $M$ {\em md-small} if for its universal cover $\Wi M$ we have the inequality $\dim_{mc}\Wi M< n$. The Gromov conjecture states in particular that
 a closed Riemannian manifold with  positive scalar curvature is $md$-small.
The main source of $md$-small manifolds are inessential manifolds. In fact the $md$-smallness of manifolds with positive scalar curvature in \cite{Bol1} and \cite{BD} was obtained as a consequence of their inessentiality.

We recall that an $n$-manifold $M$
is {\em inessential} if a map $f:M\to B\pi$ that classifies its universal covering can be deformed to the
$(n-1)$-skeleton $B\pi^{(n-1)}$. It is known that an orientable manifold is inessential if and only if $f_*([M])=0$
for the integral homology where $[M]$ is the fundamental class. An orientable manifold $M$ is called
{\em rationally inessential} if $f_*([M])= 0$ for the rational homology. Gromov asked~\cite{Gr1} whether
every $md$-small manifold is rationally inessential. It turns out that the answer is negative~\cite{Dr}.
In order to solve this problem I modified Gromov's definition of macroscopic dimension by introducing  in~\cite{Dr} a more geometric invariant  denoted as $\dim_{MC}$.

\begin{defin}[\cite{Dr}]\label{def2} For a metric space $\dim_{MC}X\le n$ if there is a uniformly cobounded Lipschitz map 
$g:X\to K\subset\ell_2$ to an $n$-dimensional simplicial complex realized in the standard  simplex of a Hilbert space.
\end{defin}
We note that with the same result $K$ can be equipped with the geodesic metric induced by the imbedding $K\subset\ell_2$.

We recall that the asymptotic dimension is defined as follows~\cite{Gr2},\cite{BeD}: {\em $\as X\le n$ if for every $\epsilon>0$ there  is a uniformly cobounded $\epsilon$-Lipschitz map 
$g:X\to K\subset\ell_2$ to an $n$-dimensional simplicial complex realized in the standard  simplex of a Hilbert space.}

It  follows  immediately from the definitions that
$$
(*)\ \ \ \ \ \ \ \ \ \ \ \ \ \dim_{mc}X\le\dim_{MC}X\le\as X.
$$
In the case when $X=\Wi M$ is the universal cover of a closed manifold $M$ with the lifted from $M$ metric it is easy to see that all three dimensions do not depend on the choice of the metric on $M$. In that case
the second inequality can be strict as it can be seen from the following example: If $M$ is a closed smooth 4-manifold
with the fundamental group $\pi_1(M)=\mathbb Z^5$, then $\dim_{MC}\Wi M\le 4$ and $\as\Wi M=\as\mathbb Z^5=5$.

The first inequality in $(*)$ is a different story. It is not easy to come with an example of a metric space $X$ that distinguishes these two dimensions. Moreover, in ~\cite{Dr2} I have excluded the Lipschitz condition from the definition of  $\dim_{MC}$ assuming that it is superfluous and it always can be achieved for reasonable metric spaces like Riemannian manifolds. 
Recently I discovered to my  surprise that these  two concepts are  different even for the universal coverings of closed manifolds. 

In this paper we present an example of an  $n$-manifold $M$ with $$\dim_{mc}\Wi M< \dim_{MC}\Wi M=n.$$
In fact we show that this conditions hold true for every closed $n$-manifold $M$ whose fundamental group $\pi$ is a geometrically finite amenable duality group with the cohomological dimension $cd(\pi)>n$.
This result (and the example) became possible after developing some (co)homological theory of $md$-small manifolds parallel to
that for $\dim_{MC}$ from~\cite{Dr},\cite{Dr2}. It was done in \S 3-5. In \S 2 we present some general geometric and topological properties of $md$-small manifolds. 
\subsection{Acknowledgment} The author would like to thank FIM-ETH Zurich for the hospitality. Also he is thankful to the referee for valuable remarks.

\section{ $md$-Small manifolds}

We recall that a cover $\mathcal U=\{U_{\alpha}\}_{\alpha\in J}$ of a metric space $X$ is called open if all $U_{\alpha}$ are open, it is uniformly bounded if there  is $a>0$ such that $diam U_{\alpha}<a$ for all $\alpha\in J$. It has the order $\le n$ if every point of $X$ is covered by no more than $n$ elements of $\mathcal U$. A cover $\mathcal U$ of $X$ is called irreducible if no subfamily of $\mathcal U$
covers $X$. A cover $\mathcal W$ of $X$ refines a cover $\mathcal U$ if for every $W\in\mathcal W$ there is $U\in\mathcal U$
such that $W\subset U$. We recall that $\epsilon>0$ is a Lebesgue number for a cover $\mathcal U$ of $X$ if for every subset $A\subset X$ of diameter $<\epsilon$ there is $U\in\mathcal U$ such that $A\subset U$.

A metric space is called {\em proper} if every closed ball with respect to that metric is compact.

\begin{prop}\label{md-charact}
For a proper  metric space $X$ the following are equivalent:

(1) $\dim_{mc}X\le n$;

(2) There is a uniformly bounded irreducible open cover $\mathcal U$ of $X$ with the order $\le n+1$.

(3) There is a continuous map $f:X\to K$ to a locally finite $n$-dimensional simplicial complex and a number $b>0$ such that 
$f^{-1}(\Delta)\ne\emptyset$ and $diam f^{-1}(\Delta)< b$ for every simplex $\Delta\subset K$.
\end{prop}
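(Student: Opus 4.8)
The plan is to prove the equivalence by a cycle of implications $(1)\Rightarrow(3)\Rightarrow(2)\Rightarrow(1)$, exploiting properness of $X$ throughout to convert ``uniform diameter bounds'' into ``finite-order covers'' and back. The technical engine is the standard dictionary between maps to simplicial complexes and open covers via the nerve and via preimages of open stars; the only subtlety is to keep all the quantitative bounds uniform, and this is exactly where properness is used.

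For $(1)\Rightarrow(3)$: given a uniformly cobounded map $g\colon X\to K^n$ with $\mathrm{diam}\, g^{-1}(y)<b$ for all $y$, I first note that we may assume $K^n$ is locally finite and that $g$ is simplicial after a subdivision, but the real point is to bound $\mathrm{diam}\, g^{-1}(\Delta)$ for each closed simplex $\Delta$. Since $g$ is continuous and $\Delta$ is compact connected, $g^{-1}(\Delta)$ need not have small diameter a priori; the trick is to first pass to a ``proper'' model of $K$ in which each simplex is covered by the images of bounded subsets of $X$. Concretely, compose $g$ with a map that collapses $K^n$ onto the subcomplex $g(X)$ (using properness of $X$ to see $g(X)$ is contained in a locally finite subcomplex), then observe that because each point-preimage has diameter $<b$ and $X$ is proper, the preimage of any bounded neighborhood in $K$ is bounded; choosing the simplices of $K$ to be of bounded combinatorial size (again via local finiteness) gives the uniform bound $\mathrm{diam}\, f^{-1}(\Delta)<b'$. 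Replacing $K$ by $g(X)$ also arranges $f^{-1}(\Delta)\neq\emptyset$ for all $\Delta$.

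For $(3)\Rightarrow(2)$: take $f\colon X\to K$ as in (3) and let $\mathcal U$ be the cover of $X$ by preimages of open stars of vertices, $U_v=f^{-1}(\mathrm{St}(v,K))$. Each $\mathrm{St}(v,K)$ is a union of finitely many open simplices whose closures are simplices $\Delta$ with $\mathrm{diam}\, f^{-1}(\Delta)<b$; since $K$ is locally finite this union is finite, so $\mathrm{diam}\, U_v$ is bounded by a uniform multiple of $b$, giving uniform boundedness. The order of $\mathcal U$ is at most $n+1$ because $K$ is $n$-dimensional, so no point lies in more than $n+1$ open stars. Finally, irreducibility is obtained by passing to an irreducible subcover: every uniformly bounded open cover of a proper metric space has an irreducible subcover (this is where properness is essential — one removes redundant elements by a Zorn's lemma argument, and properness guarantees the intersection of a nested chain still covers). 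Passing to a subcover preserves uniform boundedness and can only decrease the order.

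For $(2)\Rightarrow(1)$: given a uniformly bounded open cover $\mathcal U$ of order $\le n+1$, form its nerve $N(\mathcal U)$, which is an $n$-dimensional simplicial complex, locally finite because $\mathcal U$ is uniformly bounded and $X$ is proper (any bounded set meets only finitely many $U_\alpha$). Realize $N(\mathcal U)$ in $\ell_2$ and take a partition-of-unity map $g\colon X\to N(\mathcal U)$ subordinate to $\mathcal U$; then $g^{-1}(\mathrm{St}(v))\subset U_v$, so $g^{-1}(y)$ for any $y$ in the open star of a vertex $v$ lies in $U_v$ and hence has diameter $<a$. Since every point of $N(\mathcal U)$ lies in the open star of some vertex, $\mathrm{diam}\, g^{-1}(y)<a$ for all $y$, so $g$ is uniformly cobounded and $\dim_{mc}X\le n$.

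The main obstacle is the quantitative bookkeeping in $(1)\Rightarrow(3)$: a general continuous uniformly cobounded map has no control on preimages of positive-dimensional simplices, so the bulk of the work is replacing $K^n$ by a well-behaved (locally finite, combinatorially bounded) target using properness of $X$, after which the remaining implications are routine nerve-theoretic arguments.
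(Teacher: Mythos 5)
Your implications $(3)\Rightarrow(2)$ and $(2)\Rightarrow(1)$ are correct and essentially match what the paper does (nerve of an irreducible, uniformly bounded, order-$(n+1)$ cover in one direction, preimages of open stars in the other). The problem is $(1)\Rightarrow(3)$, which is the step where you chose to concentrate the real work, and the pivotal claim there is false. You assert: ``because each point-preimage has diameter $<b$ and $X$ is proper, the preimage of any bounded neighborhood in $K$ is bounded.'' Take $X=[0,\infty)$ with its usual metric (proper), let $K=[0,\infty)$ be triangulated by the intervals $[m,m+1]$ (locally finite, with every vertex in at most two simplices, so combinatorially as bounded as one could wish), and set $g(x)=\sqrt{x}$. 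Then $g^{-1}(y)$ is a single point for every $y$, so $g$ is uniformly cobounded with any bound $b>0$; yet $g^{-1}([m,m+1])=[m^2,(m+1)^2]$ has diameter $2m+1\to\infty$. Here $g$ is already surjective, so replacing $K$ by $g(X)$ and ``collapsing'' changes nothing, and no choice of $b'$ in your conclusion $\operatorname{diam} f^{-1}(\Delta)<b'$ is possible for this $f=g$. The phenomenon you need to exclude is exactly this: a uniformly cobounded map can dilate uncontrollably in the ``tangential'' directions, so bounded point-preimages give no control whatsoever on preimages of positive-dimensional simplices, even when $K$ is beautifully behaved.

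The cure is not to massage the given $(K,g)$ but to build a \emph{new} complex from scratch, and this is why the paper proves $(1)\Rightarrow(2)$ first and only then $(2)\Rightarrow(3)$. From a uniformly cobounded $g:X\to N$ one first manufactures a uniformly bounded open cover of $X$ of order $\le n+1$: for each $y$ one finds a neighbourhood $V_y$ in $N$ with $\operatorname{diam} g^{-1}(V_y)$ controlled, refines $\{V_y\}$ inside $N$ to a locally finite cover $\mathcal W$ of order $\le n+1$ (possible since $\dim N\le n$), pulls back to $g^{-1}(\mathcal W)$, and passes to an irreducible subcover $\mathcal U$ by Zorn. Then $(2)\Rightarrow(3)$ is obtained by mapping $X$ to the nerve $N(\mathcal U)$ via a partition of unity; here properness plus irreducibility forces every element of $\mathcal U$ to meet only finitely many others, so $N(\mathcal U)$ is locally finite, irreducibility gives $f^{-1}(v)\ne\emptyset$ for each vertex, and the diameter bound on $\mathcal U$ gives $\operatorname{diam} f^{-1}(\Delta)$ bounded because all the elements of $\mathcal U$ involved in a simplex of the nerve pairwise intersect. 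The complex appearing in $(3)$ is thus the nerve, an entirely fresh object, not a quotient or subcomplex of the original $K^n$; this is the structural point your plan misses. As written, your cycle $(1)\Rightarrow(3)\Rightarrow(2)\Rightarrow(1)$ opens with a step that is not merely unproved but refuted by the square-root example, so the argument does not close.
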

\begin{proof}
$(1) \ \Rightarrow \ (2)$. 
Let $g:X\to N$  be a continuous map to an $n$-dimensional simplicial complex and $a>0$ be such that
$diam g^{-1}(y)< a$.  We may assume that $N$ is countable.  For every $y\in N$ there is a neighborhood $V_y$ such that $diam g^{-1}(V_y)<a$. Consider a cover $\mathcal V$ of $N$.
Since $\dim N\le n$ there is an open locally finite cover $\mathcal W$ of $N$ that refines $V$ and has the order $\le n+1$.
It is easy to prove using the Zorn Lemma that every locally finite cover has an irreducible subcover.
We define $\mathcal U$ to be an irreducible subcover of $g^{-1}(\mathcal W)=\{g^{-1}(W)\mid W\in\mathcal W\}$.

$(2) \ \Rightarrow \ (3)$. Let $\mathcal U$ be a uniformly bounded irreducible open cover of $X$ of order $\le n+1$.

Claim: Every  closed ball $B(x,r)$ in $X$ is covered by finitely many elements of $\mathcal U$. 

Assume the contrary. Consider the subfamily $\mathcal A=\{U\in\mathcal U\mid U\cap B(x,r+a)\ne\emptyset\}$ where $a$ is an upper bound on size of $U$. Since the ball $B(x,r+a)$ is compact, there is a finite subcover $\mathcal B\subset\mathcal A$ of $B(x,r+a)$.
Let  $\mathcal C=\{U\in\mathcal U\mid U\cap (X\setminus B(x,r+a))\ne\emptyset\}$. Then every $U\in\mathcal  C$ does not intersect
$B(r,a)$. Therefore, $\mathcal V=\mathcal B\cup\mathcal C$ is a proper subcover of $\mathcal U$ which contradicts to the irreducibility condition.

Using a partition of unity subordinated by $\mathcal U$ we construct a projection $g:X\to K$ of $X$ to the nerve
$N(\mathcal U)=K$ of $\mathcal U$. The irreducibility of $\mathcal U$ implies that $f^{-1}(v)\ne\emptyset$ for every vertex $v\in K$.
Since the order of $\mathcal U$ does not exceed $n+1$, we obtain $\dim K\le n$. The above claim implies that every 
$U\in\mathcal U$ intersects only finitely many other elements of $\mathcal U$. Hence every vertex in $K$
has bounded valency. Thus, $K$ is locally finite.

Clearly, $(3) \ \Rightarrow \ (1)$.
\end{proof}

We recall that a real-valued function $f:X\to\mathbb R$ is called upper semicontinuous if for every $x\in X$ and $\epsilon>0$,
there is a neighborhood $U$ of $x$ such that $f(y)\le f(x)+\epsilon$ for all $y\in U$. Note that for a proper continuous map 
$g:X\to Y$ of a metric space the function $\psi:Y\to\mathbb R$ defined as $\psi(y)=diam g^{-1}(y)$ is upper semicontinuous.

A homotopy $H:X\times I\to Y$  in a metric space $Y$ is called {\em bounded} if there is a uniform upper bound on the diameter
of paths $H(x\times I)$, $x\in X$.

For a finitely presented group $\pi$ we can choose an Eilenberg-McLane complex $K(\pi,1)=B\pi$ to be locally finite and hence metrizable. We consider a proper geodesic metric on $B\pi$ and lift it to the universal cover $E\pi$. Thus, every closed ball in $B\pi$ is compact.

We recall that a metric space $Y$ is {\em uniformly contractible} if for every $R>0$ there is $S>0$ such that every $R$-ball $B(y,R)$ in $Y$
can be contracted to a point in the ball $B(y,S)$. Note that in the case of finite complex $B\pi$ the space $E\pi$ is uniformly contractible.
Generally, $E\pi$ is uniformly contractible over any compact set $C\subset B\pi$: {\em For every $R>0$ there is $S>0$ such that every $R$-ball $B(y,R)$ in $Y$
with $p(y)\in C$ can be contracted to a point in the ball $B(y,S)$ where $p:E\pi\to B\pi$ is the universal covering. }

\begin{thm}\label{st-cech} 
For a closed $n$-manifold $M$ the following conditions are equivalent:

(1) $M$ is $md$-small.

(2) A lift $\Wi f:\Wi M\to E\pi$ of the classifying map $f:M\to B\pi$ to the universal coverings 
can be deformed to a map $q:\Wi M\to E\pi^{(n-1)}$  by a bounded homotopy $H:\Wi M\times[0,1]\to E\pi$ for any choice of proper geodesic metric on $B\pi$.

The homotopy $H$ can be taken to be cellular for any choice of CW structure on $M$ and the product CW structure on $\Wi M\times[0,1]$.

\end{thm}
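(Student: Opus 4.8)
The plan is to prove the equivalence $(1)\Leftrightarrow(2)$ by relating the $md$-smallness of $M$ to the existence of a uniformly cobounded map to an $(n-1)$-complex, and then to transport such a map along the classifying map $\Wi f:\Wi M\to E\pi$ using the uniform contractibility of $E\pi$ over compact sets. The reference model here is the corresponding statement for $\dim_{MC}$ in \cite{Dr}, which this section is designed to parallel; the new ingredient is the careful bookkeeping of metric bounds needed to replace ``Lipschitz'' by ``bounded homotopy.''

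For the implication $(2)\Rightarrow(1)$, I would start from the bounded homotopy $H$ pushing $\Wi f$ into $E\pi^{(n-1)}$. Composing with a proper map $E\pi^{(n-1)}\to E\pi^{(n-1)}$ and using that $\Wi M$ is a closed manifold's universal cover (so the $\pi$-action is cocompact), the map $q:\Wi M\to E\pi^{(n-1)}$ is uniformly cobounded: preimages of points in $E\pi^{(n-1)}$ have uniformly bounded diameter because $q$ is $\pi$-equivariant up to the bounded homotopy $H$ and $M$ is compact. Since $E\pi^{(n-1)}$ is a locally finite $(n-1)$-dimensional complex, \propref{md-charact}$(3)\Rightarrow(1)$ (applied after checking the preimage-of-simplex condition, which follows from cocompactness since $\Wi f$ is essentially surjective onto a cocompact portion) shows $\dim_{mc}\Wi M\le n-1$, i.e.\ $M$ is $md$-small.

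For the harder direction $(1)\Rightarrow(2)$, suppose $\dim_{mc}\Wi M<n$, so by \propref{md-charact} there is a uniformly bounded irreducible open cover $\mathcal U$ of $\Wi M$ of order $\le n$, hence a uniformly cobounded map $g:\Wi M\to K$ to a locally finite complex of dimension $\le n-1$. I would then average $g$ over the $\pi$-action (or more precisely, replace $\mathcal U$ by a $\pi$-invariant uniformly bounded cover of order $\le n$, which exists by pulling back a finite cover of the compact quotient refined to have order $\le n$ and then taking the full $\pi$-orbit, intersecting with the lift of a Lebesgue-small cover of $M$) to arrange that $g$ is $\pi$-equivariant with respect to a simplicial $\pi$-action on $K$. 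Since $K$ has dimension $\le n-1$ and $\Wi M$ is $(n-1)$-connected through dimensions controlled by the manifold structure, the equivariant map $g$ together with $\Wi f$ gives a $\pi$-map $K\to E\pi$ after choosing an equivariant classifying map for $K$; composing, $\Wi f$ is $\pi$-homotopic to a map factoring through $K$, hence through a locally finite $(n-1)$-complex mapping to $E\pi$. One then uses the uniform contractibility of $E\pi$ over compact sets to deform this factorization into $E\pi^{(n-1)}$ by a \emph{bounded} homotopy: each cell of $\Wi M$ maps into a bounded region, its image can be homotoped into the $(n-1)$-skeleton within a region of uniformly bounded size (the size bound $S=S(R)$ from uniform contractibility over the compact set $p(\text{image of the cell})\subset B\pi$, which ranges over the finitely many cells of $M$), and these local homotopies are assembled cell-by-cell over the CW structure of $\Wi M\times[0,1]$, giving cellularity for free.

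The main obstacle is the equivariance step: the cover $\mathcal U$ produced by \propref{md-charact} is not a priori $\pi$-invariant, and naively taking its $\pi$-orbit destroys the order bound. The fix is to intersect the lift of a fine (Lebesgue number less than the relevant constant) finite cover of the compact $M$ with a $\pi$-invariant collection, using that $\dim M=n$ to keep the order at $\le n$ after passing to a suitable refinement — this is exactly the kind of ``microscopic vs.\ macroscopic'' dimension interplay on the universal cover where the hypothesis that $M$ is a closed $n$-manifold (not just any metric space) is essential. A secondary subtlety is ensuring the homotopy $H$ is genuinely bounded and not just proper; this is where the ``uniform contractibility over compacta'' formulation, rather than plain uniform contractibility, does the work, since the relevant balls in $E\pi$ all project into the compact $B\pi$.
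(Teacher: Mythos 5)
Your strategy for $(1)\Rightarrow(2)$ contains a gap that is not just a missing detail but an approach that, if it could be completed, would prove a statement the paper shows is \emph{false}. You propose to replace the uniformly cobounded map $g:\Wi M\to K$ by a $\pi$-equivariant one, via a $\pi$-invariant uniformly bounded open cover of $\Wi M$ of order $\le n$. But a $\pi$-invariant cover of this kind necessarily descends to a finite open cover of the compact quotient $M$; a finite open cover of a compact metric space has a positive Lebesgue number, which lifts to a positive Lebesgue number for the cover of $\Wi M$. By Proposition~\ref{MD-charact} this would give $\dim_{MC}\Wi M\le n-1$, i.e.\ $M$ would be MD-small. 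Thus the equivariance step you flag as ``the main obstacle'' cannot be carried out in general: if it could, every $md$-small manifold would be MD-small, contradicting Theorem~\ref{ex} of this paper, whose entire purpose is to exhibit $md$-small manifolds that are MD-large. In other words, the \emph{absence} of equivariance (equivalently, of a Lebesgue number) is precisely the gap between $\dim_{mc}$ and $\dim_{MC}$, so you cannot will it away.

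The paper's proof of $(1)\Rightarrow(2)$ avoids equivariance entirely. It builds a map $h:K\to E\pi$ by induction on the skeleta of $K$, with no group action on $K$ at all: $h(v)$ is chosen in $\Wi f(g^{-1}(v))$ for each vertex $v$, and the cobound $diam\, g^{-1}(\Delta)<b$ together with the Lipschitz constant of $\Wi f$ gives a uniform bound $d(h(v),h(v'))\le\lambda b$ on edges, which one then fills in using uniform contractibility of $E\pi$ over compact subsets of $B\pi$ (properness of the metric plus compactness of $f(M)$ and its bounded neighborhoods keep the relevant sets compact at each stage). The boundedness of the resulting homotopy between $\Wi f$ and a cellular approximation of $h\circ g$ comes directly from the fact that $h\circ g$ is within finite distance of $\Wi f$; it is not obtained by assembling local homotopies cell-by-cell over $\Wi M$ as you sketch. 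For $(2)\Rightarrow(1)$, your argument is in the right spirit but overcomplicated: there is no need to verify the nonemptiness condition $f^{-1}(\Delta)\ne\emptyset$ of Proposition~\ref{md-charact}(3). Since $q$ lands in the $(n-1)$-dimensional complex $E\pi^{(n-1)}$ already, it suffices to show $q$ is uniformly cobounded, which the paper does by an upper-semicontinuity/compactness argument on the function $y\mapsto diam\,\Wi f^{-1}(B(y,a))$ over the compact $a$-neighborhood of $f(M)$.
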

\begin{proof}
$(1)\ \Rightarrow\ (2)$. Let $g:\Wi M\to K$ be a continuous map to an $(n-1)$-dimensional locally finite simplicial complex
from Proposition~\ref{md-charact}. We recall that $g^{-1}(v)\ne\emptyset $ for all vertices $v\in K$ and $diam g^{-1}(\Delta)<b$ for every simplex $\Delta$. Let $\Wi f:\Wi M\to E\pi$ be an induced by $f:M\to B\pi$ map
of the universal covers. We may assume that $f$ is Lipschitz. Then so is $\Wi f$. Using the uniform contractibility of $E\pi$ over compact sets by induction on the dimension of the skeleton of $K$
we construct a map $h:K\to E\pi$ as follows.   Since $g^{-1}(v)\ne\emptyset$ for each vertex $v\in K$, we can fix $h(v)\in \Wi f(g^{-1}(v))$.
For every edge $[v,v']\subset K$ we have $$d(h(v),h(v'))=d(\Wi f(u),\Wi f(u'))\le\lambda d_{\Wi M}(u,u')\le \lambda b$$
where $u\in g^{-1}(v)$, $u'\in g^{-1}(v')$, and $\lambda$ is a fixed Lipschitz constant for $\Wi f$.
Since $E\pi$ is uniformly contractible over $f(M)$ there is $d_1>0$ the same for all edges such that
we can join $h(v)$ and $h(v')$ by a path of diameter $\le d_1$. Since the covering map $p:E\pi\to B\pi$ is 1-Lipschitz, the image $p(h(K^{(1)}))$ lies in the $d_1$-neighborhood of $f(M)$. In view of the properness of the metric on $B\pi$ the set $p(h(K^{(1)}))\subset B\pi$ is compact. Hence we can use the uniform contractibility of $E\pi$ over $p(h(K^{(1)})$ to define $h$ on $K^{(2)}$ and so on.
As the result we construct a map $h:K\to E\pi$ such that $h\circ g$ is in a finite distance from  $\Wi f$.

Then we take a cellular approximation $q:\Wi M\to E\pi^{(n-1)}$ of $h\circ g$. Again, the the map $q:\Wi M\to E\pi$ is in a finite distance from $\Wi f$. Therefore it can be joined with $\Wi f$ by a bounded homotopy $H:\Wi M\times[0,1]\to E\pi$. Then we take a cellular approximation of $H$ rel $M\times\{0,1\}$.

$(2)\ \Rightarrow\ (1)$. Without loss of generality we may assume that $B\pi$ and hence $E\pi$ are simplicial complexes.
Let a map $g:\Wi M\to E\pi^{(n-1)}$ be at distance $< a$ from $\Wi f$. We show that $g$ is uniformly cobounded.
Let $N=N_a(f(M))$ be the closed $a$-neighborhood of $f(M)$ in $B\pi$.  We consider a function
$\psi:N\to\mathbb R$ defined as $\psi(z)=diam\Wi f^{-1}(B(y,a))$ where $y\in p^{-1}(z)$.
Clearly $\psi$ is well-defined. Since $\psi$ is upper semicontinuous, the compactness of $N$ implies that it is bounded from above.

Thus,
there is $b>0$ such that $diam \Wi f^{-1}(B(x,a))<b$ for all $x\in N$. If $x_1,x_2\in g^{-1}(y)$, then $\Wi f(x_1),\Wi f(x_2)\in B(y, a)$. Therefore, $d_{\Wi M}(x_1,x_2) <b$.
\end{proof}
REMARK. In Theorem~\ref{st-cech}  $M$ can be taken to be any finite complex.

\

We recall first that a map between metric spaces $f:X\to Y$  is
called {\em coarsely Lipschitz} if there are positive constants
$(\lambda,b)$ such that
$$d_Y(f(x),f(x'))\le\lambda d_X(x,x')+b$$
for all $x,x'\in X$.
\begin{cor}\label{2.4}
A closed $n$-manifold $M$ is md-small if and only if for any choice of locally finite CW complex $B\pi$ and a proper geodesic metric on it
the universal cover $\Wi M$ admits a uniformly cobounded coarsely Lipschitz cellular map
$g:\Wi M \to E\pi^{(n-1)}$ into the $(n-1)$-skeleton of $E\pi$ with the compact image $pg(\Wi M)$ where the metric on $E\pi$ 
is lifted from $B\pi$ and $p:E\pi\to B\pi$ is universal covering map.
\end{cor}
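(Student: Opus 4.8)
The plan is to read the corollary off \theoref{st-cech}, adding only bookkeeping about constants, and to split it into its two implications. The ``if'' direction uses only the weakest part of the hypothesis: if for one (say simplicial) choice of $B\pi$ there is a uniformly cobounded map $g:\Wi M\to E\pi^{(n-1)}$, then, $E\pi^{(n-1)}$ being $(n-1)$-dimensional, the definition of $\dim_{mc}$ gives $\dim_{mc}\Wi M\le n-1<n$, i.e.\ $M$ is $md$-small; the coarse Lipschitz and compact-image conditions are not needed here.

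For the ``only if'' direction I would fix an arbitrary locally finite CW complex $B\pi$ with a proper geodesic metric, take a classifying map $f:M\to B\pi$ that (as in the proof of \theoref{st-cech}) we may assume is $\lambda$-Lipschitz, and let $\Wi f:\Wi M\to E\pi$ be its lift. By \theoref{st-cech} there is a cellular map $q:\Wi M\to E\pi^{(n-1)}$ joined to $\Wi f$ by a bounded homotopy $H$; fixing an upper bound $b_0$ for the diameters of the paths $H(x\times[0,1])$ gives $d(\Wi f(x),q(x))\le b_0$ for all $x$. Then $q$ is the required map. It is coarsely Lipschitz, since $d(q(x),q(x'))\le d(\Wi f(x),q(x))+\lambda d(x,x')+d(\Wi f(x'),q(x'))\le\lambda d(x,x')+2b_0$. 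It is uniformly cobounded by the argument for $(2)\Rightarrow(1)$ in \theoref{st-cech}: the closed $b_0$-neighborhood $N$ of $f(M)$ is compact, $z\mapsto diam\,\Wi f^{-1}(B(y,b_0))$ (with $y\in p^{-1}(z)$) is bounded above on $N$, and $q^{-1}(y)\subset\Wi f^{-1}(B(y,b_0))$, whence $diam\,q^{-1}(y)$ is uniformly bounded. And $pq(\Wi M)\subset N$.

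I do not expect a genuine obstacle; the only clause worth a word is the compactness of $pq(\Wi M)$. Since $p\Wi f(\Wi M)=f(M)$ is compact and $q$ is at distance $\le b_0$ from $\Wi f$, $pq(\Wi M)$ already lies in the compact neighborhood $N$ of $f(M)$, which is the sense in which ``compact image'' is used in the proof of \theoref{st-cech}. If one wants $pq(\Wi M)$ inside a finite subcomplex, one tracks that the auxiliary map $h:K\to E\pi$ built in the proof of \theoref{st-cech} stays within bounded distance of $\Wi f(\Wi M)$, chooses a finite subcomplex $L\subset B\pi$ containing the resulting compact neighborhood (possible since $B\pi$ is locally finite), and carries out the cellular approximation inside the subcomplex $p^{-1}(L)\subset E\pi$, so that $pq(\Wi M)\subset L$. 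The remaining points are routine manipulation of the constants already present in \theoref{st-cech}, together with the trivial observation that a simplicial model of $B\pi$ may be used wherever the definition of $\dim_{mc}$ is invoked.
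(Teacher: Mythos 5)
Your proof is correct and follows essentially the same route as the paper: the backward direction is immediate from the definition of $\dim_{mc}$, and the forward direction takes the map $q$ produced by Theorem~\ref{st-cech}(2), then reads off the three required properties — uniformly cobounded (via the argument already given for $(2)\Rightarrow(1)$ in that theorem), coarsely Lipschitz (being at distance $\le b_0$ from the Lipschitz map $\Wi f$), and compact projected image (since $pq(\Wi M)$ lies in the closed $b_0$-neighborhood of the compact set $f(M)$, which is compact by properness of the metric on $B\pi$). Your version simply writes out the triangle-inequality estimate and the bounding neighborhood that the paper leaves implicit in the phrase ``by the same reason and the fact that $B\pi$ is locally compact''; the side remark about finite subcomplexes is unnecessary since the corollary only asks for compact image, as you yourself note.
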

\begin{proof} The backward implication is trivial.
For the forward implication we consider a map $g:\Wi M \to E\pi^{(n-1)}$ from  Theorem~\ref{st-cech}(2). 
In the proof of the implication $(2)\Rightarrow (1)$ there it was shown that $g$ is uniformly cobounded.
It  is coarsely Lipschitz, since it is in finite distance from a Lipschitz map $\Wi f$. The last condition of the corollary is satisfied by the same reason 
and the fact that $B\pi$ is locally compact.
\end{proof}
Similarly $n$-manifolds $M$ with $\dim_{MC}\Wi M<n$ will be called {\em MD-small}. A similar result holds for MD-small manifolds.
\begin{thm}[\cite{Dr}]\label{st-C} 
For a closed $n$-manifold $M$ the following conditions are equivalent:

(1) $M$ is MD-small.

(2) A lift $\Wi f:\Wi M\to E\pi$ of the classifying map $f:M\to B\pi$ to the universal coverings 
can be deformed to the $(n-1)$-dimensional skeleton $E\pi^{(n-1)}$ of $E\pi$ by a Lipschitz  homotopy where the metric on $E\pi$
is lifted from the metric on $B\pi$ for any choice of a locally finite complex $B\pi$ and a proper geodesic metric on it.
\end{thm}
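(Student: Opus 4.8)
The plan is to prove \theoref{st-C} by following the proof of \theoref{st-cech} essentially line by line, upgrading every occurrence of ``bounded'' to ``Lipschitz''. Recall from \defref{def2} that $M$ being MD-small means there is a uniformly cobounded \emph{Lipschitz} map $g:\Wi M\to K$ onto an $(n-1)$-dimensional simplicial complex $K$ realized in the standard simplex of $\ell_2$; after the same reductions as for $\dim_{mc}$ in \propref{md-charact} we may assume $K$ is locally finite of bounded geometry, so that each of its simplices has diameter $\le\sqrt2$ and only boundedly many simplices meet any given one.

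For $(1)\Rightarrow(2)$ I would fix such a $g$, arrange $\Wi f$ to be Lipschitz with some constant $\lambda$, and build a map $h:K\to E\pi$ by induction on skeleta exactly as in \theoref{st-cech}: choose $h(v)\in\Wi f(g^{-1}(v))$ for each vertex $v$, and extend over the $k$-skeleton using the uniform contractibility of $E\pi$ over the compact set $p(h(K^{(k-1)}))\subset B\pi$. The one new ingredient is quantitative control of the extension: since $g$ is uniformly cobounded, the images $h(\partial\sigma)$ of the boundaries of the $k$-simplices $\sigma$ of $K$ have a common diameter bound, hence (choosing the contraction of a ball to be a genuine Lipschitz deformation) so do the images $h(\sigma)$; this, together with $\dim K\le n-1$, the uniform bound $\sqrt2$ on diameters of simplices of $K$, and the bounded geometry of $K$, forces $h$ to be Lipschitz. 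Consequently $h\circ g$ is a Lipschitz map at finite distance from $\Wi f$. A cellular approximation $q$ of $h\circ g$ then lands in $E\pi^{(n-1)}$ because $\dim K\le n-1$, still lies at finite distance from $\Wi f$, and — carried out with moves controlled by the local geometry of $E\pi$ over the relevant compact set — can be taken Lipschitz. Finally $q$ and $\Wi f$, being Lipschitz and at finite distance $a$, are joined by a Lipschitz homotopy obtained by connecting $q(x)$ to $\Wi f(x)$ through a ball of controlled radius, using once more the uniform contractibility of $E\pi$ over the compact set $p(q(\Wi M))\cup f(M)$. This proves (2); since (1) refers only to $\Wi M$ with the lifted metric, the argument applies to every choice of locally finite $B\pi$ and proper geodesic metric on it.

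For $(2)\Rightarrow(1)$, suppose that for some choice of $B\pi$ and metric a Lipschitz homotopy $H:\Wi M\times[0,1]\to E\pi$ deforms $\Wi f$ into $E\pi^{(n-1)}$, and put $q=H(\cdot,1)$. Since $H$ is Lipschitz in the product metric, each track $H(x\times[0,1])$ has diameter bounded by the Lipschitz constant, so $q$ lies at finite distance $a$ from $\Wi f$, and $q$ is Lipschitz as a restriction of $H$. That $q$ is uniformly cobounded is proved verbatim as in the implication $(2)\Rightarrow(1)$ of \theoref{st-cech}: on the compact neighbourhood $N=N_a(f(M))\subset B\pi$ the upper semicontinuous function $z\mapsto diam\,\Wi f^{-1}(B(y,a))$, $y\in p^{-1}(z)$, is bounded, and any two points of $q^{-1}(y)$ are sent by $\Wi f$ into $B(y,a)$. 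It remains only to move the target into $\ell_2$: composing $q$ with the canonical realization $E\pi^{(n-1)}\hookrightarrow\ell_2$, which sends a vertex $v$ to the basis vector $e_v$ and is affine on simplices, yields — because on a locally finite $(n-1)$-dimensional complex this realization is bi-Lipschitz with respect to the geodesic metric — a uniformly cobounded Lipschitz map of $\Wi M$ into an $(n-1)$-dimensional complex in the standard simplex of $\ell_2$. Hence $\dim_{MC}\Wi M\le n-1<n$.

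The heart of the matter is the Lipschitz bookkeeping rather than any new idea: one must replace the purely coarse steps of \theoref{st-cech} by quantitative ones, and this rests on a uniform (Lipschitz) version of the contractibility of $E\pi$ over compact subsets of $B\pi$ — so that the inductively constructed extensions and the concluding homotopy are not merely bounded but Lipschitz — together with the comparison between the geodesic and the $\ell_2$-realization metrics on $E\pi^{(n-1)}$. Once these uniformities are in place, each step is the Lipschitz shadow of a step already performed in the proof of \theoref{st-cech}.
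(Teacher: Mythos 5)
The paper does not prove \theoref{st-C}; it simply states it with a citation to \cite{Dr}, right after the remark ``A similar result holds for MD-small manifolds.'' So your proposal is not being measured against a proof appearing in this paper, but your overall strategy --- re-run the argument of \theoref{st-cech}, replacing ``bounded'' by ``Lipschitz'' throughout --- is exactly what the clause ``a similar result holds'' is indicating, and it is the correct route. The two genuinely new ingredients are the ones you identify: a Lipschitz (not merely bounded) version of uniform contractibility of $E\pi$ over compact subsets of $B\pi$ to make the skeletal extensions quantitative, and some care in moving between the lifted geodesic metric on $E\pi^{(n-1)}$ and the $\ell_2$-realization that appears in \defref{def2}.

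Two points in your write-up should be repaired, though neither is fatal. First, in the direction $(2)\Rightarrow(1)$ you invoke that the canonical realization $\iota:E\pi^{(n-1)}\hookrightarrow\ell_2$ is \emph{bi}-Lipschitz with respect to the geodesic metric. That is false for infinite complexes: along a long edge path the geodesic distance grows without bound while the $\ell_2$-distance between any two vertices is at most $\sqrt2$, so $\iota^{-1}$ is not Lipschitz. Fortunately you do not need it: $\iota$ is $1$-Lipschitz from the geodesic metric to $\ell_2$ and is injective, so $\iota\circ q$ is still Lipschitz and uniformly cobounded, which is all that \defref{def2} requires. Second, in the direction $(1)\Rightarrow(2)$ your sentence ``this forces $h$ to be Lipschitz'' conflates the $\ell_2$ and geodesic metrics on $K$; a map $h$ that is uniformly Lipschitz simplex-by-simplex need not be globally Lipschitz from $(K,\ell_2)$ to $E\pi$ (two simplices at $\ell_2$-distance roughly $\sqrt2$ may have $h$-images far apart). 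What one actually needs, and what does hold, is that the \emph{composite} $h\circ g$ is Lipschitz on $\Wi M$: because $\Wi M$ is a length space and there is a positive lower bound on $\ell_2$-distances between disjoint simplices of a complex of bounded dimension, a uniform simplex-wise Lipschitz bound on $h$ together with the Lipschitz and uniformly cobounded properties of $g$ gives a uniform local Lipschitz constant for $h\circ g$, hence a global one. With these two adjustments your argument goes through and matches the intended proof from~\cite{Dr}.
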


The dimension $\dim_{MC}$ can be defined in terms of open covers as follows.
\begin{prop}\label{MD-charact}
For a proper  metric space $X$ the following are equivalent:

(1) $\dim_{MC}X\le n$;

(2) There is a uniformly bounded irreducible open cover $\mathcal U$ of $X$ of the order $\le n+1$
and with a Lebesque number $>0$.
\end{prop}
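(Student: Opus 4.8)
The plan is to prove \propref{MD-charact} as the $\dim_{MC}$-analogue of \propref{md-charact}, the extra feature being that a positive Lebesgue number for the cover is precisely what corresponds to the Lipschitz condition in \defref{def2}.

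For $(2)\Rightarrow(1)$ I would take a uniformly bounded irreducible open cover $\mathcal U=\{U_\alpha\}$ of order $\le n+1$ with Lebesgue number $\epsilon>0$, form its nerve $K=N(\mathcal U)$ (of dimension $\le n$), embed $K$ in the standard simplex of $\ell_2$ with $U_\alpha$ going to the $\alpha$-th basis vector, and define $g\colon X\to K$ by the distance partition of unity $\phi_\alpha(x)=\dist(x,X\setminus U_\alpha)\big/\sum_\beta\dist(x,X\setminus U_\beta)$. As in the proof of \propref{md-charact}, irreducibility together with properness of $X$ force $\mathcal U$ to be locally finite, so the denominator is a locally finite sum of $1$-Lipschitz functions having at most $n+1$ nonzero terms near any point; and the Lebesgue number makes it bounded below, because $B(x,\epsilon/3)$ has diameter $<\epsilon$ and therefore lies in some $U_\alpha$, so $\dist(x,X\setminus U_\alpha)\ge\epsilon/3$. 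Hence the $\phi_\alpha$ are uniformly Lipschitz and $g$ is Lipschitz into $\ell_2$ (only finitely many coordinates matter near a given point). Finally $g$ is uniformly cobounded: if a point has carrier $[U_{\alpha_0},\dots,U_{\alpha_k}]$ then its preimage lies in $U_{\alpha_0}\cap\dots\cap U_{\alpha_k}$, of diameter $<a$. Thus $\dim_{MC}X\le\dim K\le n$.

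For $(1)\Rightarrow(2)$ I would start from a uniformly cobounded Lipschitz $g\colon X\to K\subset\ell_2$ with $\dim K\le n$ and Lipschitz constant $\lambda$, and pull back the open star cover $\mathcal W=\{\St(v):v\in K^{(0)}\}$, $K$ carrying the metric induced from $\ell_2$. Then $\mathcal W$ has order $\le n+1$ and Lebesgue number $\ge1/(n+1)$ in $K$ (each point of $K$ has a barycentric coordinate $\ge1/(n+1)$, which remains positive on a set of diameter $<1/(n+1)$ about it), so $g^{-1}(\mathcal W)$ is an open cover of $X$ of order $\le n+1$ with Lebesgue number $\ge1/(\lambda(n+1))$. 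To upgrade it to a uniformly bounded cover I would proceed as in \propref{md-charact}: using properness of $X$ and uniform coboundedness of $g$, refine the cover of $K$ — keeping order $\le n+1$ and a positive Lebesgue number, e.g. by discretizing a locally finite partition of unity on $K$ — to one whose members have $g$-preimages of uniformly bounded diameter, pull it back, and pass to an irreducible subcover by Zorn's lemma.

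The step I expect to be the main obstacle is precisely this last bookkeeping in $(1)\Rightarrow(2)$: a refinement of a cover generically destroys either its order or its Lebesgue number, so the intermediate cover of $K$ has to be chosen so that pulling it back and then extracting an irreducible subcover of a locally finite cover preserves all three properties at once — uniform boundedness, order $\le n+1$, and a positive Lebesgue number. By comparison the delicate point in $(2)\Rightarrow(1)$, that the distance partition of unity defines a genuinely Lipschitz map into $\ell_2$ and not merely coordinatewise Lipschitz functions, is taken care of by local finiteness of $\mathcal U$.
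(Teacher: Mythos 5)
Your proof of $(2)\Rightarrow(1)$ is correct and standard: irreducibility plus properness gives local finiteness (as in the paper's Proposition~\ref{md-charact}), the distance partition of unity with a positive Lebesgue number has denominator bounded below by $\epsilon/3$, the denominator is locally a sum of at most $n+1$ of the $1$-Lipschitz functions $\dist(\cdot,X\setminus U_\alpha)$, and only boundedly many coordinates change between nearby points, so the induced map to the nerve in $\ell_2$ is genuinely Lipschitz and has fibers contained in single cover elements. (For the record, the paper states Proposition~\ref{MD-charact} without proof, so there is no authorial argument to compare against; it is implicitly carried over from~\cite{Dr}.)

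The direction $(1)\Rightarrow(2)$, however, has a genuine gap, and you have correctly located it but then describe an approach that does not work. The step you propose — ``refine the cover of $K$ \ldots\ to one whose members have $g$-preimages of uniformly bounded diameter'' — is impossible in general: uniform coboundedness only controls preimages of \emph{points}, not of arbitrarily small neighborhoods, and $X$ being proper does not force $g$ to be a coarse/proper map. Concretely, take $X=\N$ with $d(m,n)=|m-n|$ and let $K$ be the infinite cone on a countable $0$-complex (a star with edges $[v_0,v_k]$, $k\ge1$); define $g(k)$ to be the point on $[v_0,v_k]$ with barycentric coordinate $t_{v_0}=1-1/k$. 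Then $g$ is Lipschitz into $K\subset\ell_2$, $g^{-1}(y)$ is at most one point so $g$ is uniformly cobounded, and $\dim K=1$, so this $g$ witnesses $\dim_{MC}\N\le1$. Yet $g^{-1}(V)$ is infinite (hence of unbounded diameter in $\N$) for \emph{every} neighborhood $V$ of $v_0$ in $K$, and this persists through all barycentric subdivisions since $g(k)\to v_0$. So no refinement of a cover of $K$, however constructed, produces uniformly bounded $g$-preimages. Of course $\dim_{MC}\N=0$ and the singleton cover satisfies (2), which shows the proposition survives this example; the point is that the witness cover cannot be built by pulling back a cover of $K$ along the given $g$. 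Any correct proof of $(1)\Rightarrow(2)$ must therefore combine $g$ with additional data on $X$ (for instance a proper Lipschitz function $X\to\R$, or a replacement of $g$ by a coarse map with the same fibers) rather than refining a cover of $K$ and pulling it back — so what you flag as ``bookkeeping'' is in fact the entire substance of this implication, and the route via refinements of covers of $K$ should be abandoned.
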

Thus the concepts $\dim_{mc}$ and $\dim_{MC}$ stated in the language of covers deffer only by the Lebesgue number condition.

\section{Coarsely equivariant cohomology}

Let $X$ be a CW complex and let $E_n(X)$ denote the set of its
$n$-dimensional cells. We recall that (co)homology of a CW complex
$X$ with coefficients in an abelian group $G$ are defined by 
means of the cellular chain complex $C_*(X)=\{C_n(X),\partial_n\}$
where $C_n(X)$ is the free abelian group generated by the set
$E_n(X)$. The resulting groups $H_*(X;G)$ and $H^*(X;G)$ do not
depend on the choice of the CW structure on $X$. The proof of this
fact appeals to the singular (co)homology theory and it is a part of
all textbooks on algebraic topology. The same holds true for
(co)homology groups with locally finite coefficients, i.e., for
coefficients in a $\pi$-module $L$ where $\pi=\pi_1(X)$. The chain
complex defining the homology groups $H_*(X;L)$ is $\{C_n(\Wi
X)\otimes_{\pi}L\}$ and the cochain complex defining the cohomology
$H^*(X;L)$ is $Hom_{\pi}(C_n(\Wi X),L)$ where $\Wi X$ is the
universal cover of $X$ with the cellular structure induced from $X$.
The resulting groups $H_*(X;L)$ and $H^*(X;L)$ do not depend on the
CW structure on $X$.

These groups can be interpreted as the equivariant (co)homology:
$$H_*(X;L)=H_*^{lf,\pi}(\Wi X;L)\ \ \ and\ \ \ H^*(X;L)=H_{\pi}^*(\Wi X;L).$$
The last equality is obvious since the equivariant cohomology groups
$H_{\pi}^*(\Wi X;L)$ are defined by equivariant cochains
$C_{\pi}^n(\Wi X,L)$, i.e., homomorphisms $\phi:C_n(\Wi X)\to L$ such that
the set $$S_{\phi,c}=\{\gamma^{-1}\phi(\gamma c)\mid \gamma\in\pi\}$$ consists of one element  for
every $c\in C_n(\Wi X)$. In~\cite{Dr} we defined the group of the almost equivariant cochains
$C_{ae}^n(\Wi X,L)$ that consists of homomorphism $\phi:C_n(\Wi X)\to L$ for which the set
$S_{\phi,c}$ is finite for every chain $c\in C_n(\Wi X)$.
Here we consider the group of coarsely equivariant cochains $C_{ce}^n(\Wi X,L)$ that consists of
homomorphisms $\phi: C_n(\Wi X)\to L$ such that the set $S_{\phi,c}\subset L$ 
generates a finitely generated abelian group for every $c\in C_n(\Wi X)$.
Thus we have a string of the inclusions of cochain complexes
$$
C^*_{\pi}(\Wi X,L)\subset C^*_{ae}(\Wi X,L)\subset C^*_{ce}(\Wi X,L)\subset C^*(\Wi X,L)
$$
which induces a chain of homomorphisms of corresponding cohomology groups
$$
H^n(X;L)\stackrel{\beta}\longrightarrow H^n_{ae}(\Wi X;L)\stackrel{\alpha}\longrightarrow H^n_{ce}(\Wi X;L)\to H^n(\Wi X;L).
$$
The first homomorphism was denoted in~\cite{Dr} as $\beta=pert^*_{X}$.
We call $ec^*_X=\alpha\circ pert^*_X$ the {\em equivariant coarsening} homomorphism.
Note that  the last arrow is an isomorphism for  $L$ finitely generated as abelian groups.
The cohomology groups $H^*_{ae}(\Wi X;L)$ are called the {\em almost equivariant
cohomology} of $\Wi X$ with coefficients in a $\pi$-module $L$~\cite{Dr} and the cohomology
groups $H^*_{ce}(\Wi X;L)$ are called the {\em coarsely equivariant
cohomology} of $\Wi X$ with coefficients in a $\pi$-module $L$.

One can define singular coarsely equivariant cohomology by replacing
cellular $n$-chains $c$ in the above definition by  singular $n$-chains.
 The standard argument show that the
singular version of coarsely equivariant cohomology coincides with
the cellular. Thus the group $H^*_{ce}(\Wi X;L)$ does not depend on
the choice of a CW complex structure on $X$.
\\
\\

REMARK. In the case when $L=\Z$ is a trivial module the cohomology
theory $H^*_{ae}(\Wi X;\Z)$ coincides with the
$\ell_{\infty}$-cohomology, in particular, when $X=E\pi$ it is the
$\ell_{\infty}$-cohomology of a group $H_{(\infty)}^*(\pi;\Z)$ in
the sense of~\cite{Ger}. Whereas the coarsely equivariant cohomology
$H^*_{ce}(\Wi X;\Z)$ equals the standard (cellular) cohomology of
the universal cover $H^*(\Wi X;\Z)$.
\\
\\

A  proper cellular map $f:X\to Y$ that induces an
isomorphism of the fundamental groups lifts to a proper cellular map
of the universal covering spaces $\bar f:\Wi X\to\Wi Y$. The lifting
$\bar f$ defines a chain homomorphism $\bar f_*:C_n(\Wi X)\to
C_n(\Wi Y)$ and a cochain homomorphism $\bar f^*:Hom_{ce}(C_n(\Wi
Y),L)\to Hom_{ce}(C_n(\Wi X),L)$. The latter defines a homomorphisms
of the coarsely equivariant cohomology groups
$$\bar f^*_{ce}:H^*_{ce}(\Wi Y;L)\to H^*_{ce}(\Wi X;L).$$

Suppose that $\pi$ acts freely on CW complexes $\Wi X$ and $\Wi Y$
such that the actions preserve the CW complex structures. We call a
cellular map $g:\Wi X\to \Wi Y$ {\em coarsely equivariant} if the set
$$\bigcup_{\gamma\in\pi}\{\gamma^{-1}g_*(\gamma e)\}\subset C_*(\Wi Y)$$
generates a finitely generated subgroup for every cell $e$ in $\Wi
X$ where $g_*:C_*(\Wi X)\to C_*(\Wi Y)$ is the induced chain map. In
other words $g$ is coarsely equivariant if for every cell $e$ the
union $\cup_{\gamma\in\pi}\{\gamma^{-1}g_*(\gamma e)\}$ is contained
in a finite subcomplex.

On a locally finite simplicial complex we consider the geodesic
metric in which every simplex is isometric to the standard.
\begin{prop}\label{induced1}
Let $f:X\to Y$ be a proper coarsely equivariant cellular map. Then the
induced homomorphism on cochains takes the coarsely equivariant
cochains to coarsely equivariant.
\end{prop}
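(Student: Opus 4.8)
The statement asserts that if $f\colon X\to Y$ is a proper coarsely equivariant cellular map, then the induced cochain map $f^{\#}\colon \mathrm{Hom}(C_n(\wt Y),L)\to \mathrm{Hom}(C_n(\wt X),L)$ carries $C^n_{ce}(\wt Y,L)$ into $C^n_{ce}(\wt X,L)$. The plan is to trace through the definitions and check the finite-generation condition directly. Fix $\phi\in C^n_{ce}(\wt Y,L)$, so that for every cell $e'$ of $\wt Y$ the set $S_{\phi,e'}=\{\gamma^{-1}\phi(\gamma e')\mid \gamma\in\pi\}$ generates a finitely generated subgroup of $L$. We must show that for every cell $e$ of $\wt X$, the set $S_{f^{\#}\phi,\,e}=\{\gamma^{-1}(f^{\#}\phi)(\gamma e)\mid\gamma\in\pi\}$ generates a finitely generated subgroup.

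First I would unwind $(f^{\#}\phi)(\gamma e)=\phi(\bar f_*(\gamma e))=\phi(\gamma\,\bar f_*(e))$, using that the lift $\bar f$ is $\pi$-equivariant on chains (it is a lift of a cellular map, hence commutes with the deck action). Writing $\bar f_*(e)=\sum_{i} m_i e'_i$ as a finite integral combination of cells of $\wt Y$, we get
$$\gamma^{-1}(f^{\#}\phi)(\gamma e)=\gamma^{-1}\phi\Bigl(\gamma\sum_i m_i e'_i\Bigr)=\sum_i m_i\,\bigl(\gamma^{-1}\phi(\gamma e'_i)\bigr),$$
so $S_{f^{\#}\phi,\,e}$ lies in the subgroup generated by $\bigcup_i S_{\phi,e'_i}$, which is a finite union of finitely generated subgroups, hence finitely generated. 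This is the heart of the argument; everything else is bookkeeping.

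The one genuine point requiring care — and the step I expect to be the main (mild) obstacle — is the role of the coarse equivariance hypothesis on $f$. The naive computation above only used that $\bar f_*(e)$ is a single finite chain for each fixed $e$, which is automatic. The coarse-equivariance hypothesis $\bigcup_{\gamma}\{\gamma^{-1}\bar f_*(\gamma e)\}$ lying in a finite subcomplex is what one actually needs when the module structure is nontrivial: then $\bar f_*(\gamma e)\neq \gamma\,\bar f_*(e)$ in general, and one must instead write $\gamma^{-1}\bar f_*(\gamma e)=\sum_i n_i(\gamma) e'_i$ where the cells $e'_i$ range over a fixed finite set (independent of $\gamma$) by coarse equivariance, and the integer coefficients $n_i(\gamma)$ are bounded in each coordinate because $\bar f_*(\gamma e)$ has bounded "complexity". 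Then $\gamma^{-1}(f^{\#}\phi)(\gamma e)=\sum_i n_i(\gamma)\,\gamma^{-1}\phi(\gamma e'_i)$ again lands in the finitely generated group generated by $\bigcup_i S_{\phi,e'_i}$, using that each $S_{\phi,e'_i}$ is $\pi$-invariant-up-to-the-$\gamma^{-1}\gamma$-twist already built into its definition. So I would organize the proof around: (i) reduce to a single cell $e$; (ii) apply coarse equivariance of $f$ to get a finite set of cells $\{e'_i\}$ controlling all $\bar f_*(\gamma e)$; (iii) expand $\gamma^{-1}(f^{\#}\phi)(\gamma e)$ in terms of the $\gamma^{-1}\phi(\gamma e'_i)$; (iv) invoke coarse equivariance of $\phi$ to conclude each contributing set is finitely generated, and a finite union of such is finitely generated.

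I would also remark that the same computation shows $f^{\#}$ restricts compatibly to the almost-equivariant and strictly-equivariant subcomplexes (replacing "finitely generated subgroup" by "finite set" or "one element" respectively), which is presumably why the proposition is stated at the level of $C^*_{ce}$; and that properness of $f$ is what guarantees $\bar f$ exists as a proper cellular map between the universal covers so that $\bar f_*$ and $f^{\#}$ are defined in the first place, as recalled in the paragraph preceding the statement.
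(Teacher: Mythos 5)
Your final organization (steps (i)--(iv) in the second paragraph) is exactly the paper's argument: coarse equivariance of $f$ pins $\gamma^{-1}f_*(\gamma e)$ inside the span of finitely many fixed cells $c_1,\dots,c_m$, so $\gamma^{-1}(f^{\#}\phi)(\gamma e)$ lands in the subgroup generated by the sets $S_{\phi,c_i}$, each finitely generated since $\phi$ is coarsely equivariant. Two remarks on the surrounding discussion: the ``naive computation'' in your first paragraph assumes $\bar f_*(\gamma e)=\gamma\bar f_*(e)$, which holds for a lift of a base map but not for a general coarsely equivariant cellular map between complexes with free $\pi$-action (e.g.\ a non-equivariant homotopy), and that--rather than nontriviality of the $\pi$-module $L$--is why the coarse equivariance hypothesis on $f$ is what carries the argument; also, you do not need any boundedness of the integer coefficients $n_i(\gamma)$, since membership in the finitely generated subgroup $\langle\bigcup_i S_{\phi,c_i}\rangle$ is automatic for any $\mathbb Z$-linear combination.
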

\begin{proof}
Let $\phi:C_n(Y)\to L$ be a coarsely equivariant cochain. Let $e'$ be
a $n$-cell in $X$. There are finitely many cells $c_1,\dots,c_m\in
\Wi Y$ such that $f(\gamma e')\subset \bigcup\gamma\{c_1,\dots, c_m\}$
for all $\gamma\in\pi$. Then the group
$$\langle\bigcup_{\gamma\in\pi}\{\gamma^{-1}\phi(f(\gamma e'))\}\rangle\subset
\langle\bigcup_{\gamma\in\pi}\{\gamma^{-1}\phi(\gamma\{c_1,\dots, c_m\})\}\rangle\subset L
$$ is finitely generated since the later group is generated by finitely
many finitely generated subgroups.
\end{proof}

For every CW-complex $X$ we consider the product CW-complex
structure on $X\times[0,1]$ with the standard cellular structure on
$[0,1]$.

Proposition~\ref{induced1} and the standard facts about cellular
chain complexes imply the following.
\begin{prop}\label{induced2}
Let $X$ and $Y$ be  complexes with free cellular actions of a group
$\pi$.

(A) Then every coarsely equivariant cellular map $f:X\to Y$ induces an
homomorphism of the coarsely equivariant cohomology groups
$$
f^*:H^*_{ce}(Y;L)\to H^*_{ce}(X;L).$$

(B) If two coarsely equivariant maps $f_1, f_2: X\to Y$ are homotopic
by means of a cellular coarsely equivariant homotopy
$H:X\times[0,1]\to Y$, then they induce the same homomorphism of the
coarsely equivariant cohomology groups, $f^*_1=f^*_2$.
\end{prop}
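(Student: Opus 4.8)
The plan is to deduce both parts from the functoriality and homotopy invariance of cellular chain complexes, using Proposition~\ref{induced1} to stay inside the coarsely equivariant subcomplexes; this is exactly the reduction announced just before the statement.

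\emph{Part (A).} A cellular map $f:X\to Y$ induces in the standard way a chain map $f_\#:C_*(X)\to C_*(Y)$ with $\partial f_\#=f_\#\partial$ (each cell of $X$ is sent to a finite integral combination of cells of $Y$ of the same dimension, with the local degrees as coefficients). Dualizing, $f^\#:\Hom(C_*(Y),L)\to\Hom(C_*(X),L)$, $\phi\mapsto\phi\circ f_\#$, is a cochain map, commuting with $\delta$. By Proposition~\ref{induced1} it carries $C^*_{ce}(Y,L)$ into $C^*_{ce}(X,L)$, so it restricts to a cochain map between the coarsely equivariant cochain complexes (which are genuine subcomplexes, closed under $\delta$, as recorded above in the string of inclusions). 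Passing to cohomology then yields the homomorphism $f^*:H^*_{ce}(Y;L)\to H^*_{ce}(X;L)$, proving (A).

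\emph{Part (B).} For the homotopy invariance I would use the prism operator $P:C_n(X)\to C_{n+1}(X\times[0,1])$ sending an $n$-cell $e$ to $\pm\,(e\times I)$, the corresponding product $(n+1)$-cell, which satisfies the standard identity $\partial P+P\partial=(i_1)_\#-(i_0)_\#$ for the end inclusions $i_0,i_1:X\hookrightarrow X\times[0,1]$. Then $D:=H_\#\circ P:C_*(X)\to C_{*+1}(Y)$ is a chain homotopy, $\partial D+D\partial=(f_1)_\#-(f_2)_\#$ (with $f_1=H\circ i_1$, $f_2=H\circ i_0$). The one thing to check is that $D$ has the finiteness property driving Proposition~\ref{induced1}: for every cell $e$ of $X$, the set $\bigcup_{\gamma\in\pi}\{\gamma^{-1}D(\gamma e)\}$ lies in a finite subcomplex of $Y$. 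Since $\pi$ acts on $X\times[0,1]$ through its action on $X$ and trivially on the interval, $P$ is genuinely $\pi$-equivariant, $P(\gamma e)=\gamma\,P(e)$; hence $\bigcup_\gamma\{\gamma^{-1}D(\gamma e)\}=\bigcup_\gamma\{\gamma^{-1}H_\#(\gamma\cdot P(e))\}$, which is contained in a finite subcomplex because $H$ is coarsely equivariant (apply the definition of coarse equivariance of $H$ to the single cell $P(e)$ of $X\times[0,1]$). With this, the argument in the proof of Proposition~\ref{induced1} applies verbatim to show that $D^\#:\Hom(C_{*+1}(Y),L)\to\Hom(C_*(X),L)$, $\psi\mapsto\psi\circ D$, sends $C^{*+1}_{ce}(Y,L)$ into $C^*_{ce}(X,L)$. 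Now for a cocycle $\phi\in C^n_{ce}(Y,L)$ one has, modulo the usual signs,
$$
(f_1)^\#\phi-(f_2)^\#\phi=\phi\circ(\partial D+D\partial)=\delta(\phi\circ D)+(\delta\phi)\circ D=\delta(D^\#\phi),
$$
and $D^\#\phi\in C^{n-1}_{ce}(X,L)$ by the previous step, so $(f_1)^\#\phi$ and $(f_2)^\#\phi$ differ by a coboundary inside $C^*_{ce}(X,L)$ and define the same class; hence $f_1^*=f_2^*$ on $H^*_{ce}$.

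I do not expect a genuine obstacle here: the whole content is the finiteness bookkeeping for $H_\#$ and for the prism operator, and it reduces immediately to Proposition~\ref{induced1} — the real point being merely that $\pi$ does not move the $[0,1]$-coordinate, so the prism operator is honestly equivariant. The only mild care is with the signs in the chain-homotopy identity and with the hypothesis ``proper'' in Proposition~\ref{induced1} (whose proof in fact uses only the coarse equivariance of the map, so this is harmless).
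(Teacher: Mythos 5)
Your argument is correct and is exactly the content the paper leaves implicit when it says the proposition follows from Proposition~\ref{induced1} ``and the standard facts about cellular chain complexes.''  The key observation you isolate in (B) — that the prism operator $P$ is genuinely $\pi$-equivariant because $\pi$ acts trivially on the $[0,1]$ factor, so the coarse-equivariance bookkeeping reduces to that of $H$ alone, at which point the proof of Proposition~\ref{induced1} applies verbatim to $D=H_\#\circ P$ — is precisely the point being invoked; your note that the ``proper'' hypothesis in Proposition~\ref{induced1} is not actually used in its proof is also accurate.
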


A similar situation is with homologies.
We recall that the equivariant locally
finite homology groups are defined by the complex of infinite {\em
locally finite invariant chains}
$$ C_n^{lf,\pi}(\Wi X;L)=\{\sum_{e\in E_n(\Wi X)} \lambda_ee\mid
\lambda_{ge}=g\lambda_e,\ \lambda_e\in L\}.$$ The local finiteness
condition on a chain requires that for every $x\in\Wi X$ there is a
neighborhood $U$ such that the number of $n$-cells $e$ intersecting $U$
for which $\lambda_e\ne 0$ is finite. This condition is satisfied
automatically when $X$ is a locally finite complex. Even in that
case {\em lf} is the part the notation for the equivariant homology
since it was inherited from the singular theory. The following
proposition implies the equality $H_*(X;L)=H_*^{lf,\pi}(\Wi X;L)$ for finite $X$.
\begin{prop}
For every finite CW complex $X$ with the fundamental group $\pi$ and a
$\pi$-module $L$ the chain complex $\{C_n(\Wi X)\otimes_{\pi}L\}$ is
isomorphic to the chain complex of locally finite equivariant chains
$C_*^{lf,\pi}(\Wi X,L)$.
\end{prop}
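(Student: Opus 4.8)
The plan is to exhibit an explicit isomorphism of chain complexes
\[
\Phi\colon C_n(\Wi X)\otimes_{\pi}L \;\xrightarrow{\ \cong\ }\; C_n^{lf,\pi}(\Wi X;L),
\]
natural in $n$ so that it commutes with the boundary maps. Since $X$ is finite, fix a set $\{e_1,\dots,e_{k_n}\}$ of representatives of the $\pi$-orbits of $n$-cells of $\Wi X$; because the $\pi$-action on $\Wi X$ is free and cellular, $C_n(\Wi X)$ is a free $\Z[\pi]$-module with basis $e_1,\dots,e_{k_n}$. First I would define $\Phi$ on generators by sending $e_i\otimes\lambda$ to the invariant chain whose coefficient on $\gamma e_i$ is $\gamma\lambda$ (and $0$ on cells in other orbits), and extending additively. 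One checks this is well defined on the balanced tensor product: $e_i\gamma\otimes\lambda$ and $e_i\otimes\gamma\lambda$ are sent to the same invariant chain, because the coefficient of an invariant chain is determined by its value on any one cell of the orbit together with the invariance rule $\lambda_{ge}=g\lambda_e$.

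The key steps, in order, are: (1) verify $\Phi$ is a homomorphism of abelian groups and, since $X$ is finite, that its image genuinely lands in $C_n^{lf,\pi}(\Wi X;L)$ — here finiteness of $X$ is what guarantees the local finiteness condition is automatic (only finitely many orbits, hence the stabilizer-triviality plus properness makes each point meet only finitely many cells); (2) construct the inverse: given a locally finite invariant chain $c=\sum_e\lambda_e e$, send it to $\sum_{i}\,e_i\otimes\lambda_{e_i}$, using the chosen orbit representatives, and check this is independent of the choice of representatives precisely because of the invariance $\lambda_{\gamma e_i}=\gamma\lambda_{e_i}$; (3) check $\Phi$ and this map are mutually inverse, which is a direct computation on the free generators; (4) verify $\Phi$ commutes with $\partial_n$. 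For (4) one writes $\partial_n e_i=\sum_j n_{ij}e_j$ with $n_{ij}\in\Z[\pi]$; the boundary in $C_*(\Wi X)\otimes_\pi L$ uses these same coefficients acting on $L$, and the boundary of the corresponding invariant chain, computed orbit by orbit and then symmetrized by the $\pi$-action, yields exactly the matching coefficients. This is bookkeeping with the $\Z[\pi]$-module structure and involves no conceptual difficulty.

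The main obstacle — really the only place that needs care — is step (2), well-definedness of the inverse map, together with confirming in step (1) that $\Phi$'s image consists of \emph{locally finite} chains. Both hinge on the same two facts: the $\pi$-action on $\Wi X$ is free (so an orbit $\pi e_i$ is in bijection with $\pi$ and the coefficient function $\gamma\mapsto\lambda_{\gamma e_i}$ is unambiguous), and $X$ is a finite complex (so $\Wi X$ has finitely many cell orbits and is locally finite, making the sum defining the inverse finite and the local finiteness of $\Phi(e_i\otimes\lambda)$ automatic). Once these are in place the remaining verifications are the standard diagram-chase identities for cellular chain complexes. I would conclude by remarking that, passing to homology, this gives $H_*(X;L)=H_*^{lf,\pi}(\Wi X;L)$ for finite $X$, exactly as needed.
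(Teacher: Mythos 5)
The paper actually states this proposition without proof, treating it as a standard fact, so there is no argument in the text to compare against. Your proposed proof is correct and is the natural explicit argument one would write down: choose $\pi$-orbit representatives of the cells, map $e_i\otimes\lambda$ to the invariant chain supported on $\pi e_i$ determined by $\lambda_{\gamma e_i}=\gamma\lambda$, verify well-definedness via the balanced-tensor relation, produce the inverse by evaluating an invariant chain on orbit representatives, and check compatibility with the boundary operators using the $\Z[\pi]$-linearity of $\partial$.

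One small imprecision worth fixing: the reason local finiteness is automatic is simply that $X$ finite implies $X$ locally finite, and the universal covering $p\colon\Wi X\to X$ is a local homeomorphism, so $\Wi X$ is a locally finite CW complex; once $\Wi X$ is locally finite, \emph{every} infinite chain is automatically locally finite (this is exactly what the paper observes). The appeal to ``stabilizer-triviality plus properness'' is not the right mechanism here — freeness of the action is irrelevant to local finiteness of the complex, and in fact is not even needed for this particular part of your argument (it is needed, and you use it correctly, to guarantee that $C_n(\Wi X)$ is a free $\Z[\pi]$-module on the chosen orbit representatives).
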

For locally finite $X$ one should take direct limits.

Similarly one can define the coarsely equivariant homology groups on a
locally finite CW complex by considering infinite  coarsely equivariant
chains.  Let $X$ be a complex with the fundamental group $\pi$ and
the universal cover $\Wi X$. We call an infinite chain $\sum_{e\in
E_n(\Wi X)} \lambda_ee$ {\em coarsely equivariant} if the group generated by the set
$\{\gamma^{-1}\lambda_{\gamma e}\mid\gamma\in\pi\}\subset L$ is
finitely generated for every cell $e$. As we already have mentioned, the complex
of equivariant locally finite chains defines equivariant locally
finite homology $H^{lf,\pi}_*(\Wi X;L)$. The homology groups defined by the
coarsely equivariant locally finite chains we call  {\em the coarsely
equivariant locally finite homologies}. We denote them as
$H^{lf,ce}_*(\Wi X;L)$.  We note that like in the case of cohomology
this definition can be carried out for the singular homology and it
gives the same groups. In particular the groups $H^{lf,ce}_*(\Wi
X;L)$ do not depend on the choice of a CW complex structure on $X$

As in the case of cohomology for any complex $K$ there is an
equivariant coarsening homomorphism
$$ ec_*^K:H_*(K;L)=H^{lf,\pi}_*(\Wi K;L)\to H_*^{lf,ce}(\Wi K;L)$$
which factors through the perturbation homomorphism
$$pert_*^K:H_*(K;L)\to H^{lf, ae}_*(\Wi K;L)$$ for the almost equivariant homology defined in ~\cite{Dr}.
In the case when $L=\mathbb Z$ is a trivial $\pi$-module the  homomorphism $ec^K_*:H_*(K;\mathbb Z)\to H_*^{lf}(\Wi K;\mathbb Z)$ 
is the transfer generated by the inclusion of invariant infinite chains to locally finite.

There is an analog of Proposition~\ref{induced2} for the
coarsely equivariant locally finite homology.
\begin{prop}\label{h-induced}
Let $X$ and $Y$ be  complexes with free cellular actions of a group
$\pi$.

(A) Then every coarsely equivariant cellular map $f:X\to Y$ induces
a homomorphism of the coarsely equivariant homology groups
$$
f_*:H_*^{lf,ce}(X;L)\to H_*^{lf,ce}(Y;L).$$

(B) If two coarsely equivariant maps $f_1, f_2: X\to Y$ are
homotopic by means of a cellular coarsely equivariant homotopy, then
they induce the same homomorphism of the coarsely equivariant
cohomology groups, $(f_1)_*=(f_2)_*$.
\end{prop}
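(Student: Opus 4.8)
The plan is to obtain \propref{h-induced} by dualizing the argument for \propref{induced1} and \propref{induced2} and then invoking the standard prism (chain homotopy) construction. The only substantive ingredient is the homological counterpart of \propref{induced1}: a proper coarsely equivariant cellular map carries coarsely equivariant locally finite chains to coarsely equivariant locally finite chains.

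For part (A) I work with the lift $f\colon\wt X\to\wt Y$ and the induced chain map $f_*\colon C_*(\wt X)\to C_*(\wt Y)$. Given a coarsely equivariant locally finite chain $c=\sum_{e}\lambda_e\,e$, put $f_*(c)=\sum_{e}\lambda_e\,f_*(e)=\sum_{d}\mu_d\,d$. First I would check that $f_*(c)$ is again locally finite: properness of $f$ makes each $f^{-1}(\bar d)$ compact, hence met by only finitely many cells of $\wt X$, so every coefficient $\mu_d$ is a finite sum and local finiteness of $f_*(c)$ follows from that of $c$. Next I would verify coarse equivariance, i.e.\ that $\langle\{\gamma^{-1}\mu_{\gamma d}\mid\gamma\in\pi\}\rangle\subset L$ is finitely generated for each cell $d$ of $\wt Y$. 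Fix orbit representatives $e^{(j)}$ for the cells of $\wt X$; coarse equivariance of $f$ gives $f_*(\delta e^{(j)})=\delta\cdot\theta^{(j)}_\delta$, with $\theta^{(j)}_\delta$ supported in a fixed finite subcomplex $F_j$. A short computation using freeness of the action shows that the cells of $\wt X$ which can contribute to $\mu_{\gamma d}$ are exactly those of the form $\gamma\sigma_{j,l}^{-1}e^{(j)}$, where the elements $\sigma_{j,l}\in\pi$ and the index set of pairs $(j,l)$ do not depend on $\gamma$ (and the index set is finite, which is again where properness of $f$ is used). Since $\gamma^{-1}\lambda_{\gamma\sigma_{j,l}^{-1}e^{(j)}}=\sigma_{j,l}^{-1}\bigl(\eta^{-1}\lambda_{\eta e^{(j)}}\bigr)$ with $\eta=\gamma\sigma_{j,l}^{-1}$, we get
\[
\gamma^{-1}\mu_{\gamma d}\ \in\ \sum_{j,l}\ \sigma_{j,l}^{-1}\cdot\bigl\langle\{\eta^{-1}\lambda_{\eta e^{(j)}}\mid\eta\in\pi\}\bigr\rangle\qquad\text{for every }\gamma\in\pi,
\]
and the right-hand side is a fixed finitely generated subgroup of $L$ (a finite sum of images, under the automorphisms $\sigma_{j,l}^{-1}$ of $L$, of the finitely generated groups witnessing the coarse equivariance of $c$). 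Because $f_*$ is a chain map, it therefore descends to the homomorphism $f_*\colon H^{lf,ce}_*(X;L)\to H^{lf,ce}_*(Y;L)$ asserted in (A).

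For part (B), give $\wt X\times[0,1]$ the product CW structure with the standard structure on $[0,1]$ and the $\pi$-action that is diagonal and trivial on $[0,1]$; the cellular map $e\mapsto e\times[0,1]$ is $\pi$-equivariant and injective on cells, so it sends coarsely equivariant locally finite chains on $\wt X$ to such chains on $\wt X\times[0,1]$. A cellular coarsely equivariant homotopy $H\colon X\times[0,1]\to Y$ between $f_1$ and $f_2$ lifts to a (proper, as in \propref{induced1}) map $\bar H\colon\wt X\times[0,1]\to\wt Y$, and the classical prism operator $D_n(e)=\bar H_*(e\times[0,1])$, up to sign, satisfies $\partial D+D\partial=(f_2)_*-(f_1)_*$ on cellular chains. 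By part (A) applied to $\bar H$, the operator $D$ preserves coarsely equivariant locally finite chains, so this identity holds on $C^{lf,ce}_*$; passing to homology gives $(f_1)_*=(f_2)_*$.

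I expect the main obstacle to be precisely the coarse equivariance part of the chain-level lemma in (A): keeping the elements $\gamma^{-1}\mu_{\gamma d}$ inside a single finitely generated subgroup of $L$ as $\gamma$ ranges over $\pi$. This uses properness of $f$ to control the combinatorics of which cells, and cell-orbits, of $\wt X$ contribute to a given coefficient $\mu_d$, together with the coarse equivariance of both $c$ and $f$ and the elementary fact that a group generated by finitely many finitely generated subgroups is finitely generated. The remaining points — that $f_*$ and $D$ are chain (homotopy) maps, the prism identity, and independence of the CW structure — are routine and parallel \propref{induced1} and \propref{induced2}.
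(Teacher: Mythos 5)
Your proof is correct and follows the route the paper implicitly intends: the paper gives no explicit argument for Proposition \ref{h-induced} beyond remarking that it is the homological analog of Proposition \ref{induced2}, whose own ``proof'' is to combine the chain-level Proposition \ref{induced1} with the standard cellular chain-homotopy machinery; you supply exactly the homological counterpart of \ref{induced1} and then the prism operator. One point worth tightening: when you argue that the index set of pairs $(j,l)$ is finite, properness of $f$ controls, for each fixed $\gamma$, which cells contribute to $\mu_{\gamma d}$, but to have a single finite index set that works uniformly over all $\gamma$ you should also invoke that the $\pi$-action on $X$ is cocompact (finitely many orbits of cells), which is the case in every application in the paper ($X=\wt M$ with $M$ closed, or $X=\wt M\times[0,1]$); without that, properness alone does not obviously prevent infinitely many orbit indices $j$ from satisfying $F_j\cap\pi d\ne\emptyset$. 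With that caveat noted, the argument is sound and, if anything, more explicit than the paper's sketch of the cohomological case.
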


Let $M$ be a closed oriented $n$-dimensional PL manifold with a fixed
triangulation. Denote by $M^*$ the dual complex. There is a
bijection between $k$-simplices $e$ and the dual $(n-k)$-cells $e^*$
which defines the Poincare duality isomorphism. This bijection
extends to a similar bijection on the universal cover $\Wi M$. Let
$\pi=\pi_1(M)$. For any $\pi$-module $L$ the Poincare duality on $M$
with coefficients in $L$ is given on the cochain-chain level by
isomorphisms
$$
Hom_{\pi}(C_k(\Wi M^*),L) \stackrel{PD_k}\longrightarrow
C_{n-k}^{lf,\pi}(\Wi M;L)$$ where $PD_k$ takes a cochain
$\phi:C_k(\Wi M^*)\to L$ to the following chain $\sum_{e\in
E_{n-k}(\Wi M)}\phi(e^*)e$. The family $PD_*$ is a chain isomorphism
which is also known as the cap product
$$
PD_k(\phi)=\phi\cap F$$ with the fundamental cycle $F\in
C_n^{lf,\pi}(\Wi M)$, where $F=\sum_{e\in E_n(\Wi M)} e$. We
note that the homomorphisms $PD_k$ and $PD_k^{-1}$ extend to the
coarsely equivariant chains and cochains:
$$
Hom_{ce}(C_k(\Wi M^*),L) \stackrel{PD_k}\longrightarrow
C_{n-k}^{lf,ce}(\Wi M;L).$$ Thus, the homomorphisms $PD_*$ define
the Poincare duality isomorphisms $PD_{ce}$ between the coarsely
equivariant cohomology and homology. We summarize this in the
following
\begin{prop}\label{PD}
For any closed oriented $n$-manifold $M$ and any $\pi_1(M)$-module
$L$ the Poincare duality forms the following commutative diagram:
$$
\begin{CD}
H^k(M;L) @ >ec^*_M>> H^k_{ce}(\Wi M;L)\\
@V{\cap [M]}VV @V{PD_{ce}}VV\\
H_{n-k}(M;L) @>ec_*^M>> H_{n-k}^{lf,ce}(\Wi M;L).\\
\end{CD}
$$
\end{prop}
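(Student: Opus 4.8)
The plan is to establish the commutativity already at the level of (co)chain complexes, where all four arrows are given by explicit formulas, and only then pass to (co)homology; no diagram chasing or spectral sequences are needed. I would carry out every computation using the dual cell structure $M^*$ of $\Wi M$, which is harmless since $H^*(M;L)=H^*(M^*;L)$ and, as recorded above, the coarsely equivariant (co)homology of $\Wi M$ does not depend on the cell structure. The facts to keep in front of us are: on cochains $ec^*_M$ is simply the inclusion $C^*_{\pi}(\Wi M^*,L)\subset C^*_{ce}(\Wi M^*,L)$ (this is how $pert^*$ and $\alpha$ act there); on chains $ec_*^M$ is the inclusion $C_*^{lf,\pi}(\Wi M;L)\subset C_*^{lf,ce}(\Wi M;L)$; and both $\cap[M]$ and $PD_{ce}$ are induced by one and the same cochain-to-chain formula $PD_k(\phi)=\sum_{e\in E_{n-k}(\Wi M)}\phi(e^*)e$ — the former restricted to equivariant (co)chains, the latter to coarsely equivariant ones.

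The first real step would be to justify (it is only asserted in the text) that $PD_k$ does carry $C^k_{ce}(\Wi M^*,L)$ into $C_{n-k}^{lf,ce}(\Wi M;L)$. For this I would use that the Poincar\'e-dual bijection $e\leftrightarrow e^*$ on cells of $\Wi M$ is $\pi$-equivariant, i.e. $(\gamma e)^*=\gamma(e^*)$ for all $\gamma\in\pi$. Writing $\lambda_e:=\phi(e^*)$ for $\phi\in C^k_{ce}(\Wi M^*,L)$, one then gets $\gamma^{-1}\lambda_{\gamma e}=\gamma^{-1}\phi\big((\gamma e)^*\big)=\gamma^{-1}\phi\big(\gamma(e^*)\big)$, so the set $\{\gamma^{-1}\lambda_{\gamma e}\mid\gamma\in\pi\}$ coincides with $S_{\phi,e^*}$, which by hypothesis generates a finitely generated subgroup of $L$; that is precisely the coarse-equivariance condition on the chain $\sum_e\lambda_e e$ (local finiteness of this chain being automatic since $\Wi M$ is locally finite). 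The identical computation applied to $PD_k^{-1}$ shows it takes coarsely equivariant chains back to coarsely equivariant cochains, so $PD_k\colon C^*_{ce}(\Wi M^*,L)\to C_{n-*}^{lf,ce}(\Wi M;L)$ is an isomorphism of chain complexes, and this is what induces $PD_{ce}$.

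The second step is the commutativity of the square of chain complexes
$$
\begin{CD}
C^k_{\pi}(\Wi M^*;L) @>\subset>> C^k_{ce}(\Wi M^*;L)\\
@V{PD_k}VV @VV{PD_k}V\\
C_{n-k}^{lf,\pi}(\Wi M;L) @>\subset>> C_{n-k}^{lf,ce}(\Wi M;L)
\end{CD}
$$
which holds on the nose: the two vertical maps are given by the identical formula $\phi\mapsto\sum_e\phi(e^*)e$ and the two horizontal maps are inclusions, so for an equivariant cochain $\phi$ the chain $\sum_e\phi(e^*)e$ is literally the same element of $C_{n-k}^{lf,ce}(\Wi M;L)$ whichever way one traverses the square. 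Passing to cohomology along the top row and homology along the bottom row turns this into the diagram of the Proposition, since its four sides induce precisely $ec^*_M$, $\cap[M]$, $PD_{ce}$ and $ec_*^M$.

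I do not expect a genuine obstacle here: the statement is essentially formal once one records that Poincar\'e duality on $\Wi M$ is implemented by the $\pi$-equivariant relabeling of cells $e\mapsto e^*$ and that each of the four maps in the square is either an inclusion of complexes or this relabeling. If anything is delicate it is the first step — confirming that the finitely-generated-subgroup condition is preserved under $e\mapsto e^*$ — and, as indicated, this reduces to the single observation $S_{\phi,e^*}=\{\gamma^{-1}\lambda_{\gamma e}\mid\gamma\in\pi\}$.
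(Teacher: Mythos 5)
Your proof is correct and follows essentially the same route as the paper: the paragraph preceding the Proposition already records that $PD_k$ and $PD_k^{-1}$ extend to coarsely equivariant (co)chains and that $PD_{ce}$ is the cap product with the fundamental cycle $F$, so the Proposition is really a summary of a chain-level identity. The one place you add genuine detail is in verifying the asserted extension of $PD_k$ to $C^k_{ce}$, via the equivariance $(\gamma e)^*=\gamma(e^*)$ and the identity $S_{\phi,e^*}=\{\gamma^{-1}\lambda_{\gamma e}\mid\gamma\in\pi\}$; this is exactly the right check and fills in what the paper leaves implicit.
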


We note that the operation of the cap product for equivariant
homology and cohomology automatically extends on the chain-cochain level
to the cap product on the coarsely equivariant homology and
cohomology. Then the Poincare Duality isomorphism $PD_{ce}$ for $\Wi
M$ can be described as the cap product with the homology class
$ec_*^M([M])=[\Wi M]=[F]$.

\section{Obstruction to the inequality $\dim_{mc}\Wi X<n$}

We consider locally finite CW complexes supplied with a geodesic
metric with finitely many isometry types of $n$-cells for each
$n$. A typical example is a uniform simplicial complex, i.e., a simplicial complex supplied with the geodesic metric 
such that every simplex is isometric to the standard simplex. 
 Let $\pi$ be a finitely presented group. Then the classifying
space $B\pi=K(\pi,1)$ can be taken to be a locally finite simplicial
complex. We fix a geodesic metric on $B\pi$. Let $p_{\pi}:E\pi\to
B\pi$ denote the universal covering. We consider the induced CW
complex structure and induced geodesic metric on $E\pi$.

We recall some notions from the coarse geometry~\cite{Roe}.
A subset $X'\subset X$ of metric space is called {\em coarsely dense}
if there is $D>0$ such that every $D$-ball $B(x,D)$ in $X$ has nonempty intersection with $X'$.
A (not necessarily continuous) map $f:X\to Y$ between metric spaces is called a {\em coarse imbedding}~\cite{Roe}
if there are two nondecreasing tending to infinity functions $\rho_1,\rho_2:\mathbb R_+\to\mathbb R_+$ such that
$$
\rho_1(d_X(x,x'))\le d_Y(f(x),f(x'))\le \rho_2(d_X(x,x'))
$$
for all $x,x'\in X$. 

If $f:K\to B\pi$ is a classifying map of the universal covering $\Wi K$ of finite complex $K$, then any lift $\Wi f:\Wi K\to E\pi$ is a coarse imbedding.
Note that for a geodesic metric space $X$ every coarse imbedding is coarsely Lipschitz.
Also note that every coarse imbedding $f:X\to Y$ is uniformly cobounded. Indeed, for each $a\in\mathbb R_+$ with $\rho_1(a)>0$ we have $diam(f^{-1}(y))\le a$ for all $y$.

A coarse imbedding $f:X\to Y$ is called a {\em coarse equivalence} if  the image $f(X)$ is  coarsely dense in $Y$. Thus for metric CW complexes $X$
that we deal with the inclusion $X^{(n-1)}\subset X^{(n)}$ is a coarse equivalence.
For every coarse equivalence $f:X\to Y$ there is an {\em inverse} coarse equivalence $g:Y\to X$, i.e., a map $g$ such that $f\circ g$ and $g\circ f$ are in bounded distance from the identities
$1_Y$ and $1_X$ respectively.

Let $A$ be a subset of a CW complex $X$. The star neighborhood
$St(A)$ of $A$ is the closure of the union of all cells in $X$ that
have a nonempty intersection with $A$. 
\begin{prop}\label{3.3} Let $X$ and $Y$ be universal covers of locally
finite complexes with fundamental group $\pi$ and let $X'\subset X$
be a coarsely dense $\pi$-invariant subset. Let $\Phi:X\to Y$ be a
coarsely Lipschitz cellular map with the coarsely equivariant
restriction $\Phi|_{X'}$. Then $\Phi$ is coarsely equivariant.
\end{prop}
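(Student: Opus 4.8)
\textbf{Proof proposal for Proposition~\ref{3.3}.}
The plan is to reduce the coarse equivariance of $\Phi$ on all of $X$ to the coarse equivariance of $\Phi|_{X'}$ by exploiting that $X'$ is coarsely dense, that $\Phi$ is coarsely Lipschitz, and that both complexes are locally finite with finitely many isometry types of cells in each dimension. Fix a cell $e$ in $X$ and pick a point $x_e$ in the closure of $e$. Since $X'$ is coarsely dense and $\pi$-invariant, there is a uniform constant $D$ and, for each $\gamma\in\pi$, a point $x'_\gamma\in X'$ within distance $D$ of $\gamma x_e$; by $\pi$-invariance of $X'$ we may in fact choose $x'_\gamma=\gamma x'$ for a single $x'\in X'$ within distance $D$ of $x_e$ (apply coarse density to $x_e$ and translate). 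Let $e'$ be a cell of $X$ containing $x'$ in its closure. Then $\gamma e'$ is uniformly close to $\gamma e$ for all $\gamma$, and since $\Phi$ is coarsely Lipschitz with constants $(\lambda,b)$, the cells $\Phi_*(\gamma e)$ and $\Phi_*(\gamma e')$ lie within a uniformly bounded distance of each other in $Y$ (here I use that a cellular coarsely Lipschitz map sends a cell to a subcomplex of uniformly bounded diameter, because there are only finitely many isometry types of cells).

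Next I would make this precise on the chain level. The cellular chain $\Phi_*(\gamma e)\in C_*(\Wi Y)$ is supported on cells of $Y$ lying within a fixed distance $R=R(\lambda,b,\mathrm{diam}\,e,D)$ of the point $\Phi(\gamma x')$; and $\Phi(\gamma x')$ is within a fixed distance of $\gamma\,\Phi(x')$ up to the bounded discrepancy controlled by the coarsely equivariant restriction $\Phi|_{X'}$ — more carefully, coarse equivariance of $\Phi|_{X'}$ says that $\bigcup_{\gamma}\{\gamma^{-1}\Phi_*(\gamma e')\}$ generates a finitely generated subgroup of $C_*(\Wi Y)$, i.e.\ is supported on a finite subcomplex $F_0\subset Y$. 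Since $\Phi_*(\gamma e)$ is supported in the $R$-neighborhood of the support of $\Phi_*(\gamma e')$, the translate $\gamma^{-1}\Phi_*(\gamma e)$ is supported in the $R$-neighborhood of $\gamma^{-1}(\text{supp}\,\Phi_*(\gamma e'))\subset F_0$. Because $Y$ is locally finite, the $R$-neighborhood of the finite subcomplex $F_0$ is again a finite subcomplex $F\subset Y$. Hence $\bigcup_{\gamma\in\pi}\{\gamma^{-1}\Phi_*(\gamma e)\}$ is contained in the finitely generated group $C_*(F)$, which is exactly the statement that $\Phi$ is coarsely equivariant at the cell $e$. Since $e$ was arbitrary, $\Phi$ is coarsely equivariant.

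The main obstacle I anticipate is the bookkeeping needed to pass from ``points are uniformly close'' to ``cellular chains are supported in a uniform neighborhood,'' uniformly over all $\gamma\in\pi$. This requires two uniformity inputs that must be invoked carefully: first, that $\Phi$ being cellular and coarsely Lipschitz forces $\mathrm{diam}\,\Phi(\bar e)$ to be bounded by a constant depending only on $\dim e$ (using the finitely-many-isometry-types hypothesis on $X$ to bound $\mathrm{diam}\,\bar e$, and then coarse Lipschitzness); and second, that local finiteness of $Y$ makes the $R$-neighborhood of a finite subcomplex finite. A subtle point is the replacement of the arbitrary nearby point $x'_\gamma\in X'$ by a genuine $\pi$-translate $\gamma x'$: this is where $\pi$-invariance of $X'$ is essential, and it is what lets a single cell $e'$ serve for all $\gamma$ simultaneously, so that coarse equivariance of $\Phi|_{X'}$ can be applied to that one cell. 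Once these uniformities are in place the argument is a neighborhood-of-a-finite-subcomplex estimate and goes through.
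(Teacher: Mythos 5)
Your proof is correct and follows essentially the same route as the paper: find a uniformly nearby cell of $X'$ (using coarse density and $\pi$-invariance to make one choice work for all $\gamma$), apply coarse equivariance of $\Phi|_{X'}$ to that cell to get a finite subcomplex of $Y$, then use coarse Lipschitzness and local finiteness to conclude everything lands in a slightly enlarged but still finite subcomplex. The only cosmetic difference is that the paper expresses the uniform enlargement as an $m$-fold iterated star neighborhood $\mathrm{St}^m(\bar\sigma_1\cup\dots\cup\bar\sigma_k)$ rather than a metric $R$-neighborhood, but under the standing finitely-many-isometry-types hypothesis these are equivalent.
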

\begin{proof} The coarsely dense condition implies that there is a number $D>0$
such that for every cell $b\subset X$ there is a cell $e\subset X'$
with $dist(e,b)<D$. Since $\Phi|_{X'}$ is coarsely equivariant, for
every closed cell $e\subset X'$ the union
$$\bigcup_{\gamma\in\pi}\gamma^{-1}\Phi(\gamma\bar e)\subset
\bar\sigma_1\cup\dots\cup\bar\sigma_k$$ lies in the finite union of
closed cells. Then
$$\bigcup_{\gamma\in\pi}\gamma^{-1}\Phi(\gamma\bar b)\subset
St^m(\bar\sigma_1\cup\dots\cup\bar\sigma_k).$$  The existence of $m$
follows from the fact that $\Phi$ is coarsely Lipschitz and
existence of universal $D$. The local finiteness of $Y$ implies that
the $m$-times iterated star neighborhood
$St^m(\bar\sigma_1\cup\dots\cup\bar\sigma_k)$ lies in a finite subcomplex
of $Y$.
\end{proof}
\begin{cor}\label{uniform=almost} Let $X$ and $Y$ be universal covers of
finite complexes with fundamental group $\pi$.  Then a cellular
coarsely Lipschitz homotopy $\Phi:X\times[0,1]\to Y$ of a coarsely
equivariant map is coarsely equivariant.
\end{cor}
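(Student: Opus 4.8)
The plan is to obtain this as an immediate application of Proposition~\ref{3.3}, taking the product complex $X\times[0,1]$ as the source. First I would set up the product as a legitimate object of the theory: if $K$ is the finite complex with $\Wi K=X$, then $K\times[0,1]$ (with the standard one-edge structure on $[0,1]$ and the product CW structure) is again a finite complex, and because $[0,1]$ is contractible its fundamental group is still $\pi$ and its universal cover is $X\times[0,1]$, the deck action being $\gamma\cdot(x,t)=(\gamma x,t)$, i.e.\ trivial on the interval factor. I would equip $X\times[0,1]$ with a geodesic product metric; it still has finitely many isometry types of cells in each dimension, so it satisfies the standing conventions and Proposition~\ref{3.3} is available for the pair $(X\times[0,1],\,Y)$.

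Next I would exhibit the coarsely dense $\pi$-invariant subset required by that proposition. The natural choice is $X'=X\times\{0\}$. It is $\pi$-invariant since the action ignores the $[0,1]$-coordinate, and it is coarsely dense because every point $(x,t)$ of the product lies within distance $1$ of $(x,0)\in X'$. By hypothesis $\Phi$ is a homotopy of a coarsely equivariant map, so $\Phi|_{X\times\{0\}}=\Phi|_{X'}$ is coarsely equivariant; also by hypothesis $\Phi$ is cellular and coarsely Lipschitz. Proposition~\ref{3.3} then gives at once that $\Phi:X\times[0,1]\to Y$ is coarsely equivariant, which is exactly the assertion. In combination with Propositions~\ref{induced2}(B) and \ref{h-induced}(B), this is precisely what guarantees that coarsely Lipschitz homotopic coarsely equivariant maps induce the same homomorphisms on coarsely equivariant (co)homology, which is the use we will make of it later.

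Since the substance of the argument is entirely contained in Proposition~\ref{3.3} --- whose iterated-star-neighborhood estimate uses coarse Lipschitzness to propagate the finite-subcomplex bound from $X\times\{0\}$ to all of $X\times[0,1]$ --- the only thing left is bookkeeping, and that is where the mild care is needed: one must check that the metric put on $X\times[0,1]$ is a genuine geodesic metric of the admissible kind (finitely many isometry types of cells) and that the embedding $x\mapsto(x,0)$ is an isometric, or at least coarse, embedding, so that neither ``coarsely Lipschitz'' nor ``coarsely dense'' is disturbed by the passage between $X$ and $X\times\{0\}$. For the standard product metric this is immediate, so I do not expect a genuine obstacle here.
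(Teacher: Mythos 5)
Your proof is correct and is exactly what the paper intends: the corollary is stated immediately after Proposition~\ref{3.3} with no separate argument, and the natural way to deduce it is to apply that proposition to the pair $(X\times[0,1],\,Y)$ with $X'=X\times\{0\}$, which is $\pi$-invariant and coarsely dense, just as you do. The checks you flag (product CW structure, product metric, deck action trivial on the $[0,1]$-factor) are the right bookkeeping and present no obstacle.
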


\begin{prop}\label{coarse imb} Let $X$ and $Y$ be metric CW complexes.
A coarsely Lipschitz extension $\bar f:X^{(n)}\to Y$ of
a coarse imbedding $f:X^{(n-1)}\to Y$ is a coarse imbedding. In particular, $\bar f$ is uniformly cobounded.
\end{prop}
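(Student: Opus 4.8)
The plan is to extend $\bar f$ one cell at a time over the $n$-cells of $X$ and to track how the two functions $\rho_1,\rho_2$ controlling the coarse imbedding $f$ are affected. Fix the constants: $f:X^{(n-1)}\to Y$ satisfies $\rho_1(d_X(x,x'))\le d_Y(f(x),f(x'))\le\rho_2(d_X(x,x'))$, and $\bar f:X^{(n)}\to Y$ is $(\lambda,b)$-coarsely Lipschitz with $\bar f|_{X^{(n-1)}}=f$. The upper bound on $d_Y(\bar f(x),\bar f(x'))$ is immediate from the coarsely Lipschitz hypothesis: set $\bar\rho_2(t)=\lambda t+b$, which is nondecreasing and tends to infinity. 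So the entire content is the lower bound, i.e. producing a nondecreasing $\bar\rho_1\to\infty$ with $\bar\rho_1(d_X(x,x'))\le d_Y(\bar f(x),\bar f(x'))$.

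For the lower bound I would argue as follows. Each point $x\in X^{(n)}$ lies in a closed $n$-cell, hence is within distance $r_n$ of $X^{(n-1)}$, where $r_n$ is the (finite) diameter bound coming from the finitely many isometry types of $n$-cells; pick $\sigma(x)\in X^{(n-1)}$ with $d_X(x,\sigma(x))\le r_n$. Then $d_X(\sigma(x),\sigma(x'))\ge d_X(x,x')-2r_n$, and since $f=\bar f|_{X^{(n-1)}}$ is a coarse imbedding,
\[
d_Y(\bar f(x),\bar f(x'))\ \ge\ d_Y(f(\sigma(x)),f(\sigma(x')))-d_Y(\bar f(x),f(\sigma(x)))-d_Y(\bar f(x'),f(\sigma(x'))).
\]
The two correction terms are each bounded by $\lambda r_n+b=:c$, using that $\bar f$ is $(\lambda,b)$-coarsely Lipschitz and $d_X(x,\sigma(x))\le r_n$. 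Hence
\[
d_Y(\bar f(x),\bar f(x'))\ \ge\ \rho_1\big(d_X(x,x')-2r_n\big)-2c .
\]
Now define $\bar\rho_1(t)=\max\{0,\ \rho_1(t-2r_n)-2c\}$ for $t\ge 2r_n$ and $\bar\rho_1(t)=0$ for $t<2r_n$; this is nondecreasing and, because $\rho_1\to\infty$, it tends to infinity. This is exactly a coarse-imbedding lower bound for $\bar f$, so together with $\bar\rho_2$ we conclude $\bar f$ is a coarse imbedding. The final ``in particular'' is then the already-recorded fact (stated just before Proposition~\ref{3.3}) that every coarse imbedding is uniformly cobounded: choose any $a$ with $\bar\rho_1(a)>0$, and then $\diam(\bar f^{-1}(y))\le a$ for all $y\in Y$.

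The only real subtlety — and the step I would be most careful about — is the legitimacy of the constant $r_n$: it uses the standing convention in this section that the complexes are metric CW complexes with a geodesic metric having only finitely many isometry types of cells in each dimension, so that closed $n$-cells have uniformly bounded diameter and $X^{(n-1)}$ is $r_n$-coarsely dense in $X^{(n)}$. I would state this explicitly at the start of the proof. Everything else is the elementary estimate above; no induction over skeleta beyond $X^{(n-1)}\subset X^{(n)}$ is needed, since the hypothesis already hands us $\bar f$ on all of $X^{(n)}$.
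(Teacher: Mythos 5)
Your proof is correct and takes essentially the same route as the paper: both rely on $X^{(n-1)}$ being coarsely dense in $X^{(n)}$, pick a nearest-point retraction onto $X^{(n-1)}$ (your $\sigma$ is the paper's coarse inverse $j$), and use the coarse Lipschitz bound on $\bar f$ to compare $\bar f$ with $f\circ\sigma$. The only difference is packaging: the paper states that $\bar f$ is at bounded distance from the coarse imbedding $f\circ j$ and invokes the standard fact that this preserves being a coarse imbedding, whereas you unwind that fact and verify the control functions $\bar\rho_1,\bar\rho_2$ directly.
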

\begin{proof} 
As we noted above, the inclusion $i:X^{(n-1)}\to X^{(n)}$ is a coarse equivalence. Let $j:X^{(n)}\to X^{(n-1)}$ be a coarse inverse
with $dist(j, 1_{X^{(n)}})<D$.
We show that $\bar f$ is in finite distance from the coarse imbedding $f\circ j$. This would imply that $\bar f$ is a coarse imbedding.
Indeed, 
$$
d_Y(\bar f(x),fj(x))=d_Y(\bar f(x),\bar f(j(x)))\le\lambda d_X(x,j(x))+c\le\lambda D+c.
$$
where $\lambda$ and $c$ are from the definition of the coarsly Lipschitz map $\bar f$.
\end{proof}

\

Here we recall some basic facts of the elementary obstruction
theory. Let $f:X\to Y$ be a cellular map that induces an isomorphism
of the fundamental groups. We want to deform the map $f$ to a map to
the $(n-1)$-skeleton $Y^{(n-1)}$. For that we consider the extension
problem $$X\supset X^{(n-1)} \stackrel{f}{\to} Y^{(n-1)},$$ i.e.,
the problem to extend $f:X^{(n-1)}\to Y^{(n-1)}$ continuously to a
map $\bar f:X\to Y^{(n-1)}$. The primary obstruction for this
problem $o_f$ is  defined by the cochain $C_f: C_n(X)\to\pi_{n-1}(Y^{(n-1)})$ which 
is defined as $C_f(e)=[f\circ\phi_e]$ where $\phi_e:\partial D^n\to X$ is the attaching map of the $n$-cell $e^n$.
It turns out that $C_f$ is a cocycle which defines the obstruction $o_f\in H^n(X;L)$  to extend $f$ to the $n$-skeleton
where
$L=\pi_{n-1}(Y^{(n-1)})$ is the $(n-1)$-dimensional homotopy group
considered as a $\pi$-module for $\pi=\pi_1(Y)=\pi_1(X)$. The
obstruction theory says that a map $g:X\to Y^{(n-1)}$ that agrees
with $f$ on the $(n-2)$-skeleton $X^{(n-2)}$ exists if and only if
$o_f=0$. The primary obstruction is natural: If $g:Z\to X$ is a
cellular map, then $o_{gf}=g^*(o_f)$. In particular, in our case
$o_f=f^*(o_1)$ where $o_1\in H^n(Y;L)$ is the primary obstruction to
the retraction of $Y$ to the $(n-1)$-skeleton.

\begin{defin}
Let $g:Y^{(n-1)}\to Z$ be a coarsely Lipschitz map of the
$(n-1)$-skeleton of an $n$-dimensional  complex to a metric space.
We call the problem to extend $g$ to a coarsely Lipschitz map $\bar
g:Y\to Z$ a {\em coarsely  Lipschitz extension problem }.
\end{defin}

\begin{defin}\label{obst}
Let $X$ be a finite $n$-complex, $n\ge 3$, with $\pi_1(X)=\pi$
and $\Wi f:\Wi X\to \Wi Y$ be a lift of a cellular map $f:X\to Y$
that induces an isomorphism of the fundamental groups. 
We note that the obstruction cocycle
$$C_{\Wi f}:C_n(\Wi X)\to\pi_{n-1}(\Wi Y^{(n-1)})=\pi_{n-1}(Y^{(n-1)})$$
being equivariant is coarsely equivariant. Thus, it defines an
element $o_{\Wi f}\in H^n_{ce}(\Wi X;\pi_{n-1}(Y^{(n-1)})).$  Since it also defines an
element $\kappa_{f}\in H^n_{\pi}(\Wi X;L)=H^n(X;L)$ of the equivariant
cohomology with the $\pi$-module $L=\pi_{n-1}(Y)$, we have $o_{\Wi
f}=ec^*_X(\kappa_{f})$.
\end{defin}

We consider metric CW complex
with the induced metric on their universal coverings.
\begin{prop}\label{obstructiontheory}
Let $\Wi f:\Wi X\to\Wi Y$ be a lift of a Lipschitz cellular map
$f:X\to Y$ of a finite  complex to a locally finite that induces an
isomorphism of the fundamental groups.  Then the above cohomology
class $o_{\Wi f}\in H^n_{ce}(\Wi X;\pi_{n-1}(Y^{(n-1)}))$ is the
primary obstruction for the following coarsely Lipschitz extension
problem
$$\Wi X\supset \Wi X^{(n-1)}\stackrel{\Wi f|}{\to} \Wi Y^{(n-1)}.$$
Thus, $o_{\Wi f}=0$ if and only if there is a coarsely Lipschitz map
$\bar g:\Wi X^{(n)}\to \Wi Y^{(n-1)}$ which agrees with $\Wi f$ on
$\Wi X^{(n-2)}$.
\end{prop}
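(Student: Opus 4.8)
The plan is to run the standard cell-by-cell primary-obstruction argument already begun in Definition~\ref{obst}, carrying coarse Lipschitz control through every stage. I would use the following features of the setup (the hypothesis $n\ge 3$ being inherited from Definition~\ref{obst}): $\Wi Y^{(n-1)}$ is simply connected, so that $\pi_{n-1}(\Wi Y^{(n-1)})=\pi_{n-1}(Y^{(n-1)})$ as a $\pi$-module and difference cochains with values in this group are well defined, and $\Wi X^{(n-2)}$ is coarsely dense in $\Wi X$; $X$ is finite, so there are only finitely many $\pi$-orbit types of cells; $\Wi Y$ is locally finite; $\Wi f$, being a lift of a cellular map that is an isomorphism on $\pi_1$, is $\pi$-equivariant, so $C_{\Wi f}$ is an equivariant cocycle; and $p\Wi f(\Wi X)=f(X)$ is compact, so the uniform contractibility of $\Wi Y$ over compact subsets of $Y$ recalled in Section~2 is available. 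The one extra geometric fact I would isolate is that a finitely generated subgroup $G\le\pi_{n-1}(\Wi Y^{(n-1)})$ is ``geometrically bounded'': representing finitely many generators of $G$ by spheres and choosing a finite subcomplex $F$ containing them and the basepoint, every element of $G$ is representable by a map into $F$, because connect-sums of spheres supported in $F$ stay in $F$ (the possibly unbounded integer coefficients that occur in a coarsely equivariant cochain do no harm, since $kg$ and $g$ admit representatives with the same image).

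\textbf{Sufficiency.} If $o_{\Wi f}=0$, write $C_{\Wi f}=\delta D$ with $D$ a coarsely equivariant cochain, and use $D$ to redefine $\Wi f$ on the $(n-1)$-cells while keeping it fixed on $\Wi X^{(n-2)}$, replacing $\Wi f|_{\partial\sigma}$'s extension $\Wi f|_\sigma$ over each $(n-1)$-cell $\sigma$ by the extension whose difference cochain with $\Wi f|_\sigma$ is $D(\sigma)$ (realized by connect-summing on a sphere representing $D(\sigma)$). The resulting cellular map $\Wi f'$ satisfies $C_{\Wi f'}=C_{\Wi f}-\delta D=0$, hence extends cellularly to $\bar g\colon\Wi X^{(n)}\to\Wi Y^{(n-1)}$. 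To see that $\bar g$ can be taken coarsely Lipschitz, for each of the finitely many orbit cells $\sigma$ use coarse equivariance of $D$ to get a finitely generated $G_\sigma\le\pi_{n-1}(\Wi Y^{(n-1)})$ containing all $\gamma^{-1}D(\gamma\sigma)$, then the geometrically-bounded fact to get a finite subcomplex $F_\sigma$ in which all of $G_\sigma$ is represented; so $D(\gamma\sigma)$ is represented by a sphere in $\gamma F_\sigma$, which by equivariance of $\Wi f$ lies within a distance of $\Wi f(\gamma\sigma)$ bounded independently of $\gamma$. Hence the modification moves points a uniformly bounded amount and $\Wi f'$ is at finite distance from $\Wi f$; the extension over the finitely many orbit types of $n$-cells is then carried out with uniformly bounded diameters using uniform contractibility of $\Wi Y$ over the compact set $p\bar g(\Wi X^{(n-1)})\subset N_\delta(f(X))$, producing a coarsely Lipschitz $\bar g$ agreeing with $\Wi f$ on $\Wi X^{(n-2)}$.

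\textbf{Necessity.} Conversely, take a coarsely Lipschitz $\bar g\colon\Wi X^{(n)}\to\Wi Y^{(n-1)}$ agreeing with $\Wi f$ on $\Wi X^{(n-2)}$. Since $\bar g$ agrees with the equivariant map $\Wi f$ on the coarsely dense $\pi$-invariant subcomplex $\Wi X^{(n-2)}$, Proposition~\ref{3.3} makes $\bar g$ coarsely equivariant. The difference cochain $d=d(\Wi f,\bar g)$ satisfies $\delta d=C_{\Wi f}-C_{\bar g}=C_{\Wi f}$, because $\bar g$ already maps into $\Wi Y^{(n-1)}$ and so its obstruction cocycle vanishes. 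For an orbit cell $\sigma$, $\gamma^{-1}d(\gamma\sigma)$ is represented by the sphere $\Wi f|_\sigma\cup_\partial(\gamma^{-1}\bar g(\gamma\sigma))$ (using equivariance of $\Wi f$), and coarse equivariance of $\bar g$ puts all the pieces $\gamma^{-1}\bar g(\gamma\sigma)$ into one fixed finite subcomplex; enlarging it to a finite simply connected subcomplex (possible since $n-1\ge 2$), whose $\pi_{n-1}$ is finitely generated, shows that $\{\gamma^{-1}d(\gamma\sigma)\}$ generates a finitely generated group, so $d$ is coarsely equivariant and $o_{\Wi f}=[\delta d]=0$ in $H^n_{ce}(\Wi X;\pi_{n-1}(Y^{(n-1)}))$.

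I expect the main obstacle to be exactly the sufficiency direction's passage from the purely cohomological identity $C_{\Wi f}=\delta D$ to a genuinely bounded modification of $\Wi f$ and then a bounded extension over the $n$-cells; this is where finiteness of the cell orbit types, local finiteness of $\Wi Y$, compactness of $f(X)$, and the geometric boundedness of finitely generated subgroups of $\pi_{n-1}$ (which needs $n\ge 3$) must all be combined. Once coarse control is in hand, the verification that every map and homotopy occurring stays coarsely equivariant is routine, via Proposition~\ref{3.3}, Corollary~\ref{uniform=almost} and Proposition~\ref{induced1}.
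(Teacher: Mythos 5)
Your necessity direction is fine and is essentially the paper's argument, cleaned up: you invoke Proposition~\ref{3.3} to get coarse equivariance of $\bar g$ from its agreement with $\Wi f$ on the coarsely dense invariant set $\Wi X^{(n-2)}$, and you bound the difference cochain by placing the spheres $\gamma^{-1}\bar g(\gamma\sigma)$ in a single finite (and, after enlarging, simply connected) subcomplex; this matches the paper's more telegraphic appeal to the fact that coarsely Lipschitz maps realize only a finitely generated subgroup of $\pi_{n-1}$.

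The sufficiency direction has a genuine gap at the step where you extend the modified map $\Wi f'$ over the $n$-cells and claim boundedness ``using uniform contractibility of $\Wi Y$ over the compact set $p\bar g(\Wi X^{(n-1)})$.'' Two things go wrong. First, in the stated generality of the proposition $Y$ is an arbitrary locally finite complex, not an aspherical one, so $\Wi Y$ is not contractible and the uniform contractibility used in Section~2 (which is for $E\pi$) is simply not available. Second, even if $\Wi Y$ were uniformly contractible, that only provides a bounded null-homotopy of $\Wi f'|_{\partial\sigma'}$ \emph{inside $\Wi Y$}; you need one inside $\Wi Y^{(n-1)}$, and compressing an $n$-disc from $\Wi Y$ back into the $(n-1)$-skeleton with a diameter bound is itself a nontrivial step, precisely of the kind obstruction theory is designed to control. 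The paper closes this gap differently: after noting $g(\partial\sigma')$ lies in a finite subcomplex $Z=\bigcup\gamma_iF_{\Wi e_i}$, it uses that $\pi_{n-1}(Z)$ is finitely generated to choose a finite $W\supset Z$ in $\Wi Y^{(n-1)}$ with $\ker\bigl(\pi_{n-1}(Z)\to\pi_{n-1}(W)\bigr)=\ker\bigl(\pi_{n-1}(Z)\to\pi_{n-1}(\Wi Y^{(n-1)})\bigr)$, so the extension of $g|_{\partial\sigma'}$ can be taken with image in $W$, and then $\gamma W$ works for the translates. That finitely-generated-kernel trick, not uniform contractibility, is the missing key lemma; once you substitute it, the rest of your argument (modification on $(n-1)$-cells via a representative of $D(\gamma\sigma)$ in $\gamma F_\sigma$, uniform bound independent of $\gamma$) matches the paper.
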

\begin{proof}
The proof goes along the lines of a similar statement from the
classical obstruction theory. Let $C_{\Wi f}=\delta\Psi$ where
$\Psi:C_{n-1}(\Wi X)\to \pi_{n-1}(\Wi Y^{(n-1)})$ is a coarsely
equivariant homomorphism. For each $(n-1)$-cell $e$ of $X$ we fix a
section $\Wi e\subset\Wi X$, an $(n-1)$-cell in $\Wi X$. Then the
set $\{\gamma^{-1}\Psi(\gamma\Wi e)\mid
\gamma\in\pi\}$ spans a
finitely generated subgroup in $\pi_{n-1}(\Wi Y^{(n-1)})$. Thus this
subgroup is contained in the image of $\pi_{n-1}(F_{\Wi e})$ for
some finite subcomplex $F_{\Wi e}\subset\Wi Y^{(n-1)}$.

Like in the classical obstruction theory we define a map
$g_{\gamma}:\gamma\Wi e\to \gamma F_{\Wi e}\subset Y^{(n-1)}$,
$\gamma\in\pi$ on cells $\gamma\Wi e$ such that $g_{\gamma}$ agrees with
$\Wi f$ outside a small $(n-1)$-ball $B_{\gamma}\subset\gamma\Wi e$ and
the difference of the restriction of $\Wi f$ and $g_{\gamma}$  to $B_{\gamma}$
defines a map $$d_{\Wi f,g_{\gamma}}:S^{n-1}=B_{\gamma}^+\cup B_{\gamma}^-\to
\gamma F_{\Wi e}\subset Y^{(n-1)}$$ that represents the class
$-\Psi(\gamma\Wi e)$. 

The union of $g_{\gamma}$ defines a bounded map
$g:\Wi X^{(n-1)}\to\Wi Y^{(n-1)}$ in such a way that the difference
map $d_{\Wi f,g}:S^{n-1}\to Y^{(n-1)}$ on the cell $\gamma\Wi e$,
$\gamma\in\pi$, represents the element
$-\Psi(\gamma\Wi e)$. Then
the elementary obstruction theory implies that for every $n$-cell
$\sigma'\subset\Wi X$ there is an extension $\bar
g_{\sigma'}:\overline{\sigma'}\to\Wi Y^{(n-1)}$ of
$g|_{\partial\sigma'}$. Since $\partial\sigma'$ is contained in finitely many $(n-1)$-cells, there are $e_1,\dots, e_k\subset X^{(n-1)}$
and $\gamma_1,\dots,\gamma_k\in\pi$ such that $$g(\partial\sigma')\subset Z=\bigcup_{i=1}^k\gamma_iF_{\Wi e_i}.$$
Since $\pi_{n-1}(Z)$ is finitely generated, there is a finite subcomplex $W\subset \Wi Y^{(n-1)}$ containing $Z$ such that $$ker\{\pi_{n-1}(Z)\to\pi_{n-1}(W)\}=ker\{\pi_{n-1}(Z)\to\pi_{n-1}(\Wi Y^{(n-1)})\}.$$
Thus, we may assume that $\bar g_{\sigma'}(\sigma')\subset W$.
If $\gamma\sigma'$ is a translate of $\sigma'$, we obtain $g(\partial\gamma\sigma')\subset\gamma Z$. Therefore we may assume that
$\bar g_{\gamma\sigma'}(\gamma\sigma')\subset \gamma W$.

Thus, the resulting extension $\bar
g:\Wi X\to\Wi Y^{(n-1)}$ is uniformly bounded and hence coarsely
Lipschitz.

In the other direction, if there is a coarsely Lipschitz map $\bar
g:\Wi X\to Y^{(n-1)}$ that coincides with $\Wi f$ on the
$(n-2)$-dimensional skeleton, then the difference cochain $d_{\Wi
f,\bar g}$ is coarsely equivariant. Indeed, for any $\lambda,b>0$
there are only homotopy classes from a finitely generated subgroup
of $\pi_{n-1}(Y)$ can be realized by  coarsely Lipschitz maps
preserving a base point. Then the formula $\delta d_{\Wi f,\bar g}=
C_{\bar g}-C_{\Wi f}$ and the fact that $o_{\bar g}=0$ imply that
$o_{\Wi f}=0$.
\end{proof}

Let $[e]\in \pi_{n-1}(\Wi Y^{(n-1)})$ denote the element of the
homotopy group defined by the attaching map of an $n$-cell $e$. Then
the homomorphism $C_{\Wi 1}: C_n(\Wi Y)\to\pi_{n-1}(\Wi Y^{(n-1)})$
defined as $C_{\Wi 1}(e)=[e]$ is an equivariant cocycle with the
cohomology class $o_{\Wi 1}\in H^n_{ce}(\Wi
Y;\pi_{n-1}(Y^{(n-1)}))$.

\begin{prop}\label{obstruction}
(1) The cohomology class $o_{\Wi f}$ from the above Proposition is
the image under $\Wi f^*$ of the class $o_{\Wi 1}\in H^n_{ce}(\Wi
Y;\pi_{n-1}(Y^{(n-1)}))$.

(2) The class $o_{\Wi 1}$ comes under the equivariant coarsening homomorphism
$ec_{\pi}^*$ from the primary obstruction $\kappa_1\in H^n(Y;
\pi_{n-1}(Y^{(n-1)}))$ to retract $Y$ to the $(n-1)$-dimensional
skeleton.
\end{prop}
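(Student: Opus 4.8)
The plan is to reduce both assertions to naturality of the primary obstruction cocycle together with the functoriality of the equivariant coarsening homomorphism, exactly as in the classical case but keeping track of the fact that every cocycle in sight is genuinely equivariant (not merely coarsely equivariant), so no care about finiteness is needed here. For (1), recall that $o_{\Wi f}$ is the class of the cocycle $C_{\Wi f}:C_n(\Wi X)\to\pi_{n-1}(\Wi Y^{(n-1)})$ with $C_{\Wi f}(e)=[\Wi f\circ\phi_e]$, where $\phi_e:\partial D^n\to\Wi X^{(n-1)}$ is the attaching map of the $n$-cell $e$. Since $\Wi f$ is cellular, $\Wi f\circ\phi_e$ factors as $\Wi f|\circ(\Wi 1\circ\phi_e)$ through $\Wi Y^{(n-1)}$; but $\Wi 1\circ\phi_e$ is, up to the identification of $\partial D^n$ with $S^{n-1}$, precisely the attaching data that $\Wi f_*$ carries to the chain $\Wi f_*(e)\in C_n(\Wi Y)$. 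So on the cochain level $C_{\Wi f}=\Wi f^{\#}(C_{\Wi 1})$, i.e.\ $C_{\Wi f}(e)=C_{\Wi 1}(\Wi f_*(e))$, where $\Wi f^{\#}$ is the cochain map induced by the chain map $\Wi f_*$. Passing to cohomology in $H^*_{ce}$ — which is legitimate because $\Wi f$, being a lift of the finite-to-finite cellular map $f$, is proper and equivariant, hence \emph{a fortiori} coarsely equivariant, so Proposition~\ref{induced2}(A) applies — gives $o_{\Wi f}=\Wi f^*(o_{\Wi 1})$.

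For (2), the point is simply that $C_{\Wi 1}$ is an \emph{equivariant} cochain: the deck action is cellular, so the attaching map of $\gamma e$ is $\gamma$ applied to that of $e$, whence $C_{\Wi 1}(\gamma e)=\gamma\cdot C_{\Wi 1}(e)$ in $\pi_{n-1}(\Wi Y^{(n-1)})$ with its natural $\pi$-module structure. Therefore $C_{\Wi 1}$ lies in the subcomplex $C^*_\pi(\Wi Y,\pi_{n-1}(Y^{(n-1)}))$ and defines a class in $H^n_\pi(\Wi Y;\pi_{n-1}(Y^{(n-1)}))=H^n(Y;\pi_{n-1}(Y^{(n-1)}))$, which is by definition the classical primary obstruction $\kappa_1$ to retracting $Y$ onto $Y^{(n-1)}$. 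The image of $\kappa_1$ under the inclusion of equivariant cochains into coarsely equivariant ones is $o_{\Wi 1}$, and that inclusion induces exactly the equivariant coarsening homomorphism $ec^*_\pi=\alpha\circ pert^*$ (as defined in \S3 via the chain $C^*_\pi\subset C^*_{ae}\subset C^*_{ce}$). Hence $o_{\Wi 1}=ec^*_\pi(\kappa_1)$.

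I do not expect a serious obstacle; the only thing to be careful about is that both $C_{\Wi f}$ and $C_{\Wi 1}$ are to be regarded first on the cochain level, where all identities are literal equalities of homomorphisms, and only then pushed to cohomology — so one should state the cochain-level identity $C_{\Wi f}=\Wi f^{\#}C_{\Wi 1}$ explicitly and invoke Proposition~\ref{induced2} (functoriality of $H^*_{ce}$ under proper coarsely equivariant cellular maps) to descend. One should also note that combining (1) and (2) with the naturality of $ec^*$ under $\Wi f^*$ recovers the relation $o_{\Wi f}=ec^*_X(\kappa_f)$ with $\kappa_f=f^*(\kappa_1)$ already recorded in Definition~\ref{obst}, providing a consistency check on the two formulas.
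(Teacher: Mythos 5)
Your proposal is correct and follows essentially the same route as the paper: the paper disposes of (1) by invoking naturality of the primary obstruction under (coarsely Lipschitz, coarsely equivariant) cellular maps and of (2) by pointing to Definition~\ref{obst}, and you have simply unpacked both steps — the cochain-level identity $C_{\Wi f}=\Wi f^{\#}C_{\Wi 1}$ combined with Proposition~\ref{induced2} for (1), and the observation that $C_{\Wi 1}$ is honestly $\pi$-equivariant so its class factors through $H^n_\pi(\Wi Y;L)=H^n(Y;L)$ for (2).
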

\begin{proof}
The first part is the naturality of obstructions for coarsely
Lipschitz extension problems with respect to coarsely Lipschitz
maps. Like in the case of classical obstruction theory, it follows
from the definition.

The second part follows from definition (see Definition~\ref{obst}).
\end{proof}

\begin{thm}\label{obstr-dim}
Let $X$ be a finite $n$-complex with $\pi_1(X)=\pi$ and let $f:X\to
B\pi$ be a  map that induces an isomorphism of the fundamental
groups where $B\pi$ is a locally finite CW complex. Then $\dim_{mc}\Wi X<n$ if and only if the above obstruction
is trivial, $o_{\Wi f}=0$.
\end{thm}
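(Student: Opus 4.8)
The plan is to prove \theoref{obstr-dim} by combining the ``cobounded $\Leftrightarrow$ deformable to the $(n-1)$-skeleton by a bounded homotopy'' characterization of $md$-smallness (\theoref{st-cech}) with the obstruction-theoretic machinery of \propref{obstructiontheory}. The key observation is that $md$-smallness of $\Wi X$, by \theoref{st-cech}, is equivalent to the deformability of a lift $\Wi f:\Wi X\to E\pi$ of the classifying map to $E\pi^{(n-1)}$ by a \emph{bounded} (hence coarsely Lipschitz) homotopy, and that this deformability is exactly what is measured by the coarsely Lipschitz extension obstruction. Since $X$ is an $n$-complex, deforming $\Wi f$ into the $(n-1)$-skeleton is precisely the coarsely Lipschitz extension problem $\Wi X\supset\Wi X^{(n-1)}\stackrel{\Wi f|}{\to}\Wi Y^{(n-1)}$ considered in \propref{obstructiontheory} (here $Y=B\pi$, $\Wi Y=E\pi$), whose primary obstruction is $o_{\Wi f}\in H^n_{ce}(\Wi X;\pi_{n-1}(B\pi^{(n-1)}))$.

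First I would set up the dictionary: replace $B\pi$ by a locally finite simplicial complex with a proper geodesic metric and $E\pi$ by its universal cover with the lifted metric, so the hypotheses of \theoref{st-cech} and \propref{obstructiontheory} apply; we may assume $f$ is Lipschitz and $\Wi f$ its lift, which is a coarse imbedding (hence coarsely Lipschitz). For the forward direction, assume $\dim_{mc}\Wi X<n$. By \theoref{st-cech}(2) the map $\Wi f$ is boundedly homotopic to a cellular map $q:\Wi X\to E\pi^{(n-1)}$; a bounded homotopy between maps of a finite complex's universal cover is coarsely Lipschitz, and $q$, being at finite distance from $\Wi f$, is coarsely Lipschitz. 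Thus $q$ solves the coarsely Lipschitz extension problem (after a further homotopy we may assume $q$ agrees with $\Wi f$ on $\Wi X^{(n-2)}$, or simply invoke the difference-cochain argument as in the last paragraph of the proof of \propref{obstructiontheory}), so by that proposition $o_{\Wi f}=0$.

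For the converse, assume $o_{\Wi f}=0$. By \propref{obstructiontheory} there is a coarsely Lipschitz map $\bar g:\Wi X^{(n)}=\Wi X\to\Wi Y^{(n-1)}=E\pi^{(n-1)}$ agreeing with $\Wi f$ on $\Wi X^{(n-2)}$. The difference cochain $d_{\Wi f,\bar g}$ is coarsely equivariant (again by the argument at the end of the proof of \propref{obstructiontheory}), and I want to promote the statement ``$\bar g$ maps into the $(n-1)$-skeleton'' to ``$\bar g$ is uniformly cobounded'', i.e. that $\bar g$ witnesses $md$-smallness via \propref{md-charact}(3). The cleanest route: $\bar g$ is at finite distance from $\Wi f$. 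Indeed $\bar g$ agrees with $\Wi f$ on $\Wi X^{(n-2)}$, every point of $\Wi X$ lies in a cell whose $(n-2)$-skeleton is uniformly close, and $\bar g$ is coarsely Lipschitz, so $d(\bar g(x),\Wi f(x))$ is uniformly bounded (this is exactly the estimate in the proof of \propref{coarse imb}, using that $\Wi X^{(n-2)}\hookrightarrow\Wi X$ is a coarse equivalence when $n\ge 3$). Hence $\bar g$ is a coarse imbedding composed-up to bounded distance with $\Wi f$, in particular uniformly cobounded, and its image lies in the $(n-1)$-complex $E\pi^{(n-1)}$; composing with the nearest-point retraction onto a locally finite subcomplex if needed, \propref{md-charact} gives $\dim_{mc}\Wi X\le n-1<n$.

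The main obstacle I expect is the bookkeeping in the converse direction: making sure that ``agrees with $\Wi f$ on $\Wi X^{(n-2)}$'' plus ``coarsely Lipschitz'' genuinely forces bounded distance from $\Wi f$ on all of $\Wi X$ (this needs $n\ge 3$, so that $\Wi X^{(n-2)}$ is coarsely dense in $\Wi X$, which is already implicit in \defref{obst}), and that the resulting uniformly cobounded map into $E\pi^{(n-1)}$ can be massaged into the form required by \propref{md-charact}(3)—a locally finite $(n-1)$-complex with uniformly bounded preimages of simplices. The first point is handled by the coarse-equivalence argument of \propref{coarse imb}; the second is routine since $E\pi^{(n-1)}$ is already locally finite and uniform, so the preimages of simplices have uniformly bounded diameter once the map is uniformly cobounded and coarsely Lipschitz. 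Everything else is a direct translation through \theoref{st-cech} and \propref{obstructiontheory}.
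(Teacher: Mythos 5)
Your proposal follows the same overall skeleton as the paper's proof (Theorem~\ref{st-cech} to translate $md$-smallness into a bounded homotopy, Proposition~\ref{obstructiontheory} to turn it into vanishing of $o_{\Wi f}$, Proposition~\ref{coarse imb} to upgrade the $(n-1)$-skeleton target to coboundedness in the converse), and the converse direction is essentially the paper's argument. However, your forward direction has a real gap at the point where you pass from ``$\Wi f$ is boundedly homotopic to $q:\Wi X\to E\pi^{(n-1)}$'' to ``$q$ solves the coarsely Lipschitz extension problem.'' The statement of Proposition~\ref{obstructiontheory} requires a map that \emph{agrees with $\Wi f$ on $\Wi X^{(n-2)}$}, which $q$ does not; both of your proposed fixes are left unjustified. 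The ``further homotopy'' would need to push $q|_{\Wi X^{(n-2)}}$ to $\Wi f|_{\Wi X^{(n-2)}}$ inside $E\pi^{(n-1)}$ while preserving the coarse Lipschitz condition and then apply a homotopy extension -- doable (the bounded homotopy $H$ is cellular, so $H(\Wi X^{(n-2)}\times[0,1])\subset E\pi^{(n-1)}$, and standard CW retractions are uniformly Lipschitz here), but it is genuine extra work you have not carried out. The ``difference-cochain argument'' in the last paragraph of the proof of Proposition~\ref{obstructiontheory} does not apply either: it presupposes agreement on $\Wi X^{(n-2)}$, which is precisely what is in question. The paper sidesteps all of this with a naturality argument you do not use at all: the cellular bounded homotopy is coarsely Lipschitz and hence coarsely equivariant by Corollary~\ref{uniform=almost}, so Proposition~\ref{induced2}(B) gives $\Wi f^*=g^*i^*$ on coarsely equivariant cohomology, and then $o_{\Wi f}=\Wi f^*(o_1)=g^*i^*(o_1)=0$ because $H^n_{ce}(E\pi^{(n-1)};-)=0$. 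That is cleaner and avoids normalizing the map at all; you should either adopt it or fill in the normalization details carefully.
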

\begin{proof}
If $\dim_{mc}\Wi X<n$, then by Theorem~\ref{st-cech} there is a
coarsely Lipschitz cellular homotopy of $\Wi f:\Wi X\to E\pi$ to a
map $g:\Wi X\to E\pi^{(n-1)}$. By Corollary~\ref{uniform=almost},
the map $g$ is coarsely equivariant. Then by
Proposition~\ref{induced2}, $o_{\Wi f}=\Wi f^*(o_1)=g^*i^*(o_1)=0$
where $i:E\pi^{(n-1)}\subset E\pi$ is the inclusion.

We assume that $B\pi$ is a locally finite simplicial complex. If
$o_{\Wi f}=0$, then by Proposition~\ref{obstructiontheory} there is
a coarsely Lipschitz continuous map $g:\Wi X\to E\pi^{(n-1)}$ which agrees with
$\Wi f$ on the $(n-2)$-skeleton $X^{(n-2)}$. 
Note that the restriction $\Wi f|_{X^{(n-2)}}$ is a coarse imbedding.
By  consecutive applications of
Proposition~\ref{coarse imb} we obtain that $g$ is uniformly cobounded.
Therefore, $\dim_{mc}\Wi X<n$.
\end{proof}

\section{Obstruction for a manifold to be $md$-small}

We recall that the Berstein-Schwarz class $\beta\in H^1(\pi,I(\pi))$ of a group $\pi$ is define by the cochain
on the Cayley graph $\phi: G\to I(\pi)$ which takes an ordered  edge $[g,g']$ between the vertices labeled by $g,g'\in G$ to $g'-g$. Here $I(\pi)$ is the augmentation ideal of
the group ring $\mathbb Z\pi$. We use notation $I(\pi)^k$ for the $k$-times tensor product $I(\pi)\otimes\dots\otimes I(\pi)$
over $\mathbb Z$. Then the cup product $\beta\smile\dots\smile\beta$ is defined as an element of $H^k(\pi,I(\pi)^k)$.

The Berstein-Schwarz class is universal in the following sense.
\begin{thm}[\cite{Sw}, \cite{Be}, \cite{DR}]\label{universal}
For every $\pi$-module $L$ and every element $\alpha\in H^k(\pi,L)$ there is a $\pi$-homomorphism $\xi:I(\pi)^k\to L$ such that $\xi^*(\beta^k)=\alpha$ where $$\xi^*:H^k(\pi,I(\pi)^k)\to H^k(\pi,L)$$ is the coefficient homomorphism.
\end{thm}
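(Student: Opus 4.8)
The plan is to argue by induction on $k$, built on the short exact sequence of $\pi$-modules
$$0\to I(\pi)\to\Z\pi\to\Z\to 0,$$
whose connecting homomorphism $\delta\colon H^0(\pi;\Z)\to H^1(\pi;I(\pi))$ sends $1$ to $\beta$, together with two elementary facts. First, $I(\pi)$ is free as an abelian group (being a subgroup of the free abelian group $\Z\pi$), hence so is every power $I(\pi)^{k}$; consequently the displayed sequence stays exact after tensoring over $\Z$ with $I(\pi)^{k-1}$ and equipping every term with the diagonal $\pi$-action, which gives
$$0\to I(\pi)^{k}\to\Z\pi\otimes_{\Z} I(\pi)^{k-1}\to I(\pi)^{k-1}\to 0,$$
and here the middle term is, via the untwisting isomorphism $g\otimes x\mapsto g\otimes g^{-1}x$, a free $\Z\pi$-module. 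Second, cup product with $\beta=\delta(1)$ coincides with the connecting homomorphism of $0\to I(\pi)\to\Z\pi\to\Z\to 0$ tensored with an arbitrary coefficient module; in particular the connecting homomorphism $\bar\delta\colon H^{k-1}(\pi;I(\pi)^{k-1})\to H^{k}(\pi;I(\pi)^{k})$ of the second displayed sequence carries $\beta^{k-1}$ to $\beta^{k}$.

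The case $k=0$ is immediate: $I(\pi)^0=\Z$ and $\beta^0=1\in H^0(\pi;\Z)$, and a given $\alpha\in H^0(\pi;L)=L^\pi$ equals $\xi^*(1)$ for the $\pi$-homomorphism $\xi\colon\Z\to L$ sending $1$ to $\alpha$. Assume now $k\ge 1$ and the theorem in degree $k-1$ for every module. Given $\alpha\in H^k(\pi;L)$, embed $L$ into the coinduced module $C=\Hom_{\Z}(\Z\pi,L)$, which is cohomologically trivial in positive degrees by Shapiro's lemma, and set $Q=C/L$. The long exact sequence of $0\to L\to C\to Q\to 0$ shows that the connecting homomorphism $\delta\colon H^{k-1}(\pi;Q)\to H^k(\pi;L)$ is onto, so $\alpha=\delta(\alpha')$ for some $\alpha'\in H^{k-1}(\pi;Q)$. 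By the inductive hypothesis applied to $Q$ there is a $\pi$-homomorphism $\xi'\colon I(\pi)^{k-1}\to Q$ with $(\xi')^*(\beta^{k-1})=\alpha'$.

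It remains to lift $\xi'$ along the two short exact sequences. Since $\Z\pi\otimes I(\pi)^{k-1}$ is a free, hence projective, $\Z\pi$-module and $C\to Q$ is surjective, the composite $\Z\pi\otimes I(\pi)^{k-1}\to I(\pi)^{k-1}\xrightarrow{\xi'}Q$ factors through a $\pi$-homomorphism $\tilde\xi\colon\Z\pi\otimes I(\pi)^{k-1}\to C$. Composing $\tilde\xi$ with $C\to Q$ annihilates the submodule $I(\pi)^{k}$, so $\tilde\xi$ restricts to a $\pi$-homomorphism $\xi\colon I(\pi)^{k}\to L$, and the triple $(\xi,\tilde\xi,\xi')$ is a morphism of short exact sequences. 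Naturality of connecting homomorphisms then yields $\xi^*\circ\bar\delta=\delta\circ(\xi')^*$, whence
$$\xi^*(\beta^{k})=\xi^*\!\left(\bar\delta(\beta^{k-1})\right)=\delta\!\left((\xi')^*(\beta^{k-1})\right)=\delta(\alpha')=\alpha,$$
which closes the induction (replace $\xi$ by $-\xi$ if a sign convention intervenes in $\bar\delta(\beta^{k-1})=\beta^k$).

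The work is concentrated in the two structural facts set up at the start: the $\Z$-freeness of $I(\pi)^{k-1}$, which makes $\Z\pi\otimes I(\pi)^{k-1}$ projective and thereby powers the lifting, and the identification of cup-product-with-$\beta$ with a connecting homomorphism, which is exactly what forces the single class $\beta^k$ to dominate the whole functor $H^k(\pi;-)$. Both are standard, so past that point the proof is a diagram chase; the one place to stay alert is bookkeeping which module carries the diagonal $\pi$-action and which the one-sided action, and keeping the two short exact sequences aligned under the morphism $(\xi,\tilde\xi,\xi')$.
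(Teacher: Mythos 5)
The paper does not prove this theorem; it is quoted as a known result and attributed to Schwarz, Berstein, and Dranishnikov--Rudyak. There is therefore no in-paper argument to compare against, but your proposal is correct and is essentially the standard dimension-shifting proof that appears in the cited literature. The two structural inputs you isolate are exactly the right ones: the $\Z$-freeness of $I(\pi)$ (a subgroup of a free abelian group is free abelian, so $I(\pi)^{k-1}$ is $\Z$-flat and $\Z\pi\otimes_{\Z}I(\pi)^{k-1}$ with the diagonal action is $\Z\pi$-free via the untwisting $g\otimes x\mapsto g\otimes g^{-1}x$), and the identification of $\beta\smile(-)$ with the connecting homomorphism of $0\to I(\pi)\to\Z\pi\to\Z\to 0$ after tensoring with a $\Z$-flat module, which gives $\bar\delta(\beta^{k-1})=\beta^k$ up to sign. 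The surjectivity of $\delta\colon H^{k-1}(\pi;Q)\to H^{k}(\pi;L)$ for $k\ge 1$ uses Shapiro's lemma for the coinduced module $C$, as you state, and the lift $\tilde\xi$ exists by projectivity of the middle term; the resulting triple $(\xi,\tilde\xi,\xi')$ is a map of short exact sequences, so naturality of $\delta$ closes the induction. One stylistic suggestion: it is worth remarking that the inductive hypothesis is being invoked with varying coefficient module (namely $Q$ rather than $L$), which is why the statement must be proved for all $\pi$-modules simultaneously at each degree; you do set it up this way, but making the quantifier explicit earlier would preempt a common confusion in dimension-shifting arguments.
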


\begin{cor}
The cohomological dimension of a group $\pi$ can be computed by the formula
$$
cd(\pi)=\max\{n\mid \beta^n\ne 0\}
$$
where $\beta$ is the Berstein-Schwarz class of $\pi$.
\end{cor}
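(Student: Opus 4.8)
The plan is to establish the two inequalities $\max\{n\mid\beta^n\ne 0\}\le cd(\pi)$ and $cd(\pi)\le\max\{n\mid\beta^n\ne 0\}$ separately, with a little bookkeeping to cover the case $cd(\pi)=\infty$.

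For the first inequality I would simply note that if $\beta^n\ne 0$, then $\beta^n$ is a nonzero element of $H^n(\pi,I(\pi)^n)$, so this cohomology group is nontrivial and hence $n\le cd(\pi)$ by the definition of cohomological dimension. Taking the supremum over all $n$ with $\beta^n\ne 0$ yields $\max\{n\mid\beta^n\ne 0\}\le cd(\pi)$.

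For the reverse inequality, suppose first that $cd(\pi)=d<\infty$. By definition of $cd(\pi)$ there is a $\pi$-module $L$ and a nonzero class $\alpha\in H^d(\pi,L)$. Applying Theorem~\ref{universal} with $k=d$ produces a $\pi$-homomorphism $\xi\colon I(\pi)^d\to L$ whose coefficient homomorphism $\xi^*\colon H^d(\pi,I(\pi)^d)\to H^d(\pi,L)$ satisfies $\xi^*(\beta^d)=\alpha\ne 0$. Since $\xi^*$ is a group homomorphism, it sends $0$ to $0$, so $\beta^d\ne 0$, and therefore $d\le\max\{n\mid\beta^n\ne 0\}$. Together with the first inequality this gives $cd(\pi)=\max\{n\mid\beta^n\ne 0\}$. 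If instead $cd(\pi)=\infty$, then for every $d$ there is a module $L$ with $H^d(\pi,L)\ne 0$, and running the same argument for each $d$ shows $\beta^d\ne 0$ for all $d$, so $\max\{n\mid\beta^n\ne 0\}=\infty=cd(\pi)$ as well.

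There is essentially no obstacle here: the corollary is an immediate consequence of the universality of the Berstein--Schwarz class asserted in Theorem~\ref{universal}, and the only point requiring any attention is the separate treatment of the infinite-dimensional case, which costs nothing.
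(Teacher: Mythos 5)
Your proof is correct and follows precisely the route the paper intends: the corollary is stated immediately after Theorem~\ref{universal} with no proof given, as a direct consequence of universality, and your two-inequality argument (one direction trivial from the definition of $cd$, the other from pushing $\beta^d$ forward via the $\xi^*$ supplied by Theorem~\ref{universal}) together with the $cd(\pi)=\infty$ case is exactly that implicit argument spelled out.
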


The following is well-known (see \cite{BD}, Proposition 3.2.).
\begin{thm}\label{iness}
For a closed oriented $n$-manifold $M$ with the classifying map $f:M\to B\pi$ the following are equivalent:

1. $M$ is inessential;

2. $f_*([M])=0$ in $H_*(B\pi;\mathbb Z)$;

3. $f^*(\beta^n)=0$ in $H^*(M;I(\pi)^n)$ where $\beta$ is the Berstein-Schwarz class of $\pi$.
\end{thm}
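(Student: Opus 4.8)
The plan is to establish the cycle of implications $(1)\Rightarrow(2)\Rightarrow(3)\Rightarrow(1)$, the only substantial inputs being Poincar\'e duality with local coefficients, the projection formula for cap products, and the universality of the Berstein--Schwarz class (Theorem~\ref{universal}). For $(1)\Rightarrow(2)$: if $M$ is inessential then $f$ is homotopic to $j\circ g$, where $g\colon M\to B\pi^{(n-1)}$ and $j\colon B\pi^{(n-1)}\hookrightarrow B\pi$ is the inclusion, so $f_*[M]=j_*(g_*[M])$ factors through $H_n(B\pi^{(n-1)};\mathbb Z)$; the latter is $0$ since $B\pi^{(n-1)}$ has no cells of dimension $n$, whence $f_*[M]=0$.

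For $(2)\Rightarrow(3)$ I would use Poincar\'e duality. Regard $I(\pi)^n$ as a $\pi_1(M)$-module via the isomorphism $f_*\colon\pi_1(M)\to\pi$. Since $M$ is closed and oriented, cap product with the integral fundamental class is the Poincar\'e duality isomorphism $-\cap[M]\colon H^n(M;I(\pi)^n)\xrightarrow{\ \cong\ }H_0(M;I(\pi)^n)$, and since $f$ induces an isomorphism of fundamental groups it induces an isomorphism $f_*\colon H_0(M;I(\pi)^n)\xrightarrow{\ \cong\ }H_0(B\pi;I(\pi)^n)$ (both sides are the coinvariants $(I(\pi)^n)_\pi$). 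By the projection formula,
$$
f_*\big(f^*(\beta^n)\cap[M]\big)=\beta^n\cap f_*[M]\in H_0(B\pi;I(\pi)^n).
$$
If $f_*[M]=0$, the right-hand side vanishes; injectivity of $f_*$ on $H_0$ then forces $f^*(\beta^n)\cap[M]=0$, and injectivity of $-\cap[M]$ forces $f^*(\beta^n)=0$.

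For $(3)\Rightarrow(1)$ I would invoke the universality of $\beta$. By elementary obstruction theory the only obstruction to deforming $f$ into $B\pi^{(n-1)}$ is $f^*(o_1)\in H^n(M;L)$, where $L=\pi_{n-1}(B\pi^{(n-1)})$ and $o_1\in H^n(B\pi;L)$ is the primary obstruction to retracting $B\pi$ onto its $(n-1)$-skeleton; because $\dim M=n$, vanishing of $f^*(o_1)$ is equivalent to $M$ being inessential. Applying Theorem~\ref{universal} to $\alpha=o_1$ yields a $\pi$-homomorphism $\xi\colon I(\pi)^n\to L$ with $\xi^*(\beta^n)=o_1$. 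Since the map $f^*$ induced by a map of spaces commutes with the coefficient homomorphism $\xi^*$, we obtain $f^*(o_1)=\xi^*\big(f^*(\beta^n)\big)=\xi^*(0)=0$, so $M$ is inessential.

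The step demanding the most care is $(2)\Rightarrow(3)$: one must keep track of the module structure on $I(\pi)^n$ (pulled back along $f_*$), use that $[M]$ is the \emph{integral} orientation class so that the projection formula $f_*(f^*u\cap[M])=u\cap f_*[M]$ applies verbatim, and invoke Poincar\'e duality with local coefficients in the top degree. None of this is hard, but it is what makes the three a priori different conditions coincide; the genuinely nontrivial external ingredient is Theorem~\ref{universal}. (Alternatively one may prove $(1)\Leftrightarrow(2)$ and $(1)\Leftrightarrow(3)$ separately, the converse $(2)\Rightarrow(1)$ being the same Poincar\'e-duality computation applied to $o_1$ in place of $\beta^n$.)
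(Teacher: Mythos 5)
Your proof is correct. The paper itself gives no proof of Theorem~\ref{iness}---it is cited as well-known from \cite{BD}, Proposition 3.2---but your argument is the standard one and runs exactly parallel to the paper's proof of the coarse analog, Theorem~\ref{small}: your step $(2)\Rightarrow(3)$ (projection formula $f_*(f^*(\beta^n)\cap[M])=\beta^n\cap f_*[M]$, identification of $H_0$ with coinvariants, Poincar\'e duality in top degree) is the classical counterpart of the paper's $3\Rightarrow 4$ there, and your step $(3)\Rightarrow(1)$ (Theorem~\ref{universal} applied to the primary obstruction $o_1$ to retracting $B\pi$ onto $B\pi^{(n-1)}$, together with $\dim M=n$) is precisely the paper's $4\Rightarrow 1$ with the coarse-equivariant decoration stripped away.
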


The following can be considered as a coarse analog of Theorem~\ref{iness}.

\begin{thm}\label{small}
For a closed oriented $n$-manifold $M$ with the classifying map $f:M\to B\pi$ and its lift to the universal covers
$\Wi f:\Wi M\to E\pi$ the following are equivalent:

1. $M$ is md-small;

2. $\Wi f_*([\Wi M])=0$ in $H^{lf}_n(E\pi;\mathbb Z)$ where $[\Wi M]\in H^{lf}_n(\Wi M;\mathbb Z)$ is the fundamental class of $\Wi M$;

3. $f_*([M])\in ker(ec_*^{\pi})$ where $[M]$ is the fundamental class of $M$;

4. $f^*(\beta^n)\in ker(ec^*_M)$ where $\beta$ is the Berstein-Schwarz class of $\pi$.
\end{thm}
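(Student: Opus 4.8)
The plan is to prove the equivalences by connecting the macroscopic-dimension condition through the obstruction-theoretic machinery of Sections 3--4 to the Berstein--Schwarz universality result. The backbone is \theoref{obstr-dim}: replacing $M$ with a finite $n$-complex homotopy equivalent to (a neighborhood of the $n$-skeleton of) $M$ is harmless, so $M$ being $md$-small is equivalent to the vanishing of the coarse obstruction $o_{\Wi f}\in H^n_{ce}(\Wi M;\pi_{n-1}(E\pi^{(n-1)}))$. By \propref{obstruction}, $o_{\Wi f}=\Wi f^*(o_{\Wi 1})$ and $o_{\Wi 1}=ec^*_\pi(\kappa_1)$, where $\kappa_1\in H^n(B\pi;L)$, $L=\pi_{n-1}(B\pi^{(n-1)})$, is the classical primary obstruction to retracting $B\pi$ onto its $(n-1)$-skeleton. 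So (1) is equivalent to $ec^*_M(f^*(\kappa_1))=0$, using naturality of the equivariant coarsening homomorphism $ec^*$ with respect to the cellular map $f$.

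The next step is to identify $f^*(\kappa_1)$ with $f^*(\beta^n)$ up to a coefficient homomorphism, thereby getting the equivalence $(1)\Leftrightarrow(4)$. The point is that $\kappa_1\in H^n(B\pi;L)$ is, by \theoref{universal}, the image of $\beta^n\in H^n(\pi;I(\pi)^n)$ under some $\pi$-homomorphism $\xi:I(\pi)^n\to L$; conversely $\beta^n$ being the universal $n$-dimensional class, it detects exactly the same vanishing as any nonzero class it maps onto. Concretely: $ec^*_M(f^*(\kappa_1))=\xi_*\,ec^*_M(f^*(\beta^n))$ by naturality of $ec^*$ in the coefficient module, so $f^*(\beta^n)\in\ker(ec^*_M)$ forces $f^*(\kappa_1)\in\ker(ec^*_M)$, hence (1). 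For the reverse, one invokes universality once more: if $f^*(\beta^n)\notin\ker(ec^*_M)$, then since $\beta^n$ is universal among degree-$n$ classes and $\kappa_1$ is itself a nonzero class (as $cd(\pi)$ considerations and the essentiality side show the primary obstruction is the relevant one), one can choose $\xi$ so that $\xi_*$ is injective enough on the relevant element — or, more cleanly, argue that the obstruction $o_{\Wi f}$ with its \emph{natural} coefficients $L$ already maps onto $ec^*_M(f^*(\beta^n))$ under the universal coefficient map, so its vanishing would kill $ec^*_M(f^*(\beta^n))$. This matching of $\kappa_1$ and $\beta^n$ is the part I expect to require the most care, since it rests on tracking the primary obstruction through \theoref{universal} in both directions.

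The equivalence $(3)\Leftrightarrow(4)$ is then pure Poincar\'e duality: by \propref{PD} the diagram relating $ec^*_M$ and $ec_*^M$ through $\cap[M]$ and $PD_{ce}$ commutes, and $\cap[M]$ carries $f^*(\beta^n)$ to $f_*([M])$ (naturality of cap product, $f^*(\beta^n)\cap[M]=f_*(\beta^n\cap f_*[M])$, but more directly $PD$ on $M$ with coefficients in $I(\pi)^n$ sends $f^*(\beta^n)$ to a class pairing to $f_*[M]$); since $PD_{ce}$ is an isomorphism, $f^*(\beta^n)\in\ker(ec^*_M)$ iff $f_*([M])\in\ker(ec_*^\pi)$. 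Finally $(2)\Leftrightarrow(3)$: by definition $ec_*^\pi:H_n(B\pi;\Z)\to H_n^{lf,ce}(E\pi;\Z)=H^{lf}_n(E\pi;\Z)$ is precisely the transfer to locally finite chains (as noted in \S3 for trivial coefficients, where $H^{lf,ce}=H^{lf}$), and this transfer applied to $f_*([M])$ equals $\Wi f_*([\Wi M])$ because $[\Wi M]=ec_*^M([M])=[F]$ is the lift of the fundamental class and $\Wi f$ is the lift of $f$; naturality of $ec_*$ with respect to $f$ (\propref{h-induced}, after checking $\Wi f$ is coarsely equivariant, which holds as it is a lift of a map to $B\pi$ hence a coarse imbedding) gives $\Wi f_*\,ec_*^M([M])=ec_*^\pi\,f_*([M])$. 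So (2) and (3) are literally the same condition. Assembling: $(1)\Leftrightarrow(4)\Leftrightarrow(3)\Leftrightarrow(2)$.
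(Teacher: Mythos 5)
Your direction $(4)\Rightarrow(1)$ via universality is the same as the paper's, and your handling of $(2)\Leftrightarrow(3)$ is correct. But the plan has a genuine gap in the direction $(1)\Rightarrow(4)$, precisely at the place you flag as requiring ``the most care.'' Theorem~\ref{universal} gives a $\pi$-homomorphism $\xi:I(\pi)^n\to L$ with $\xi^*(\beta^n)=\kappa_1$. That lets you deduce vanishing of $ec^*_M(f^*(\kappa_1))$ from vanishing of $ec^*_M(f^*(\beta^n))$ --- not the other way around. There is no way to ``choose $\xi$ injective enough'': $\xi$ is a $\pi$-module map into $L=\pi_{n-1}(B\pi^{(n-1)})$, an arbitrary module having nothing a priori to do with $I(\pi)^n$, and coefficient morphisms produced by the Berstein--Schwarz theorem are surjections onto the target class, not injections. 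The statement that $\beta^n$ ``detects exactly the same vanishing as any nonzero class it maps onto'' is false in the relevant direction; universality makes $\beta^n$ the class whose vanishing is \emph{strongest}, so $\kappa_1$ may die while $\beta^n$ survives.

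This is exactly why the paper does not try to close the loop at $(1)\Leftrightarrow(4)$ directly but instead proves the cycle $1\Rightarrow2\Rightarrow3\Rightarrow4\Rightarrow1$. The step $1\Rightarrow2$ is geometric: a bounded homotopy of $\Wi f$ into $E\pi^{(n-1)}$ (Corollary~\ref{2.4}, Corollary~\ref{uniform=almost}, Proposition~\ref{induced2}) pushes the locally finite fundamental cycle into the $(n-1)$-skeleton, so $\Wi f_*([\Wi M])=0$. Then $2\Rightarrow3$ is the chain-level identity $ec_*^\pi f_*([M])=\Wi f_* ec_*^M([M])=\Wi f_*([\Wi M])$. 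The Poincar\'e-duality step is $3\Rightarrow4$ (one direction only), and it runs through $f_*([M])\cap\beta^n$ and the commutative square with horizontal isomorphisms in degree zero. Your $(3)\Leftrightarrow(4)$ paragraph is also off: $f^*(\beta^n)\cap[M]$ lives in $H_0(M;I(\pi)^n)$ while $f_*([M])$ lives in $H_n(B\pi;\Z)$, so $\cap[M]$ does not carry one to the other. The correct relation is $f_*([M]\cap f^*(\beta^n))=f_*([M])\cap\beta^n$, fed into Proposition~\ref{PD}. If you adopt the cyclic structure $1\Rightarrow2\Rightarrow3\Rightarrow4\Rightarrow1$, both the universality obstacle and the cap-product confusion evaporate, because you never need universality or Poincar\'e duality to run backwards.
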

\begin{proof}
1. $\Rightarrow$ 2. We may assume that $f:M\to B\pi$ is cellular  and Lipschitz 
for some metric CW complex structure on $B\pi$. If $\dim_{mc}\Wi M<n$, then by
Corollary~\ref{2.4} there is a coarsely Lipschitz cellular
homotopy of $\Wi f:\Wi X\to E\pi$ to a map $g:\Wi X\to E\pi^{(n-1)}$
with a compact projection to $B\pi$. By
Proposition~\ref{uniform=almost}, it is coarsely equivariant. Then
by Proposition~\ref{induced2} it follows that $\Wi
f_*([\Wi M]))=0$.

2. $\Rightarrow $ 3. Note that $$ec_*^{\pi}(f_*([M]))=\Wi
f_*(ec_*^M([M]))=\Wi f_*([\Wi M])=0$$ and
hence, $f_*([M])\in ker(ec_*^{\pi})$.

3. $\Rightarrow $ 4.  If $f_*([M])\in ker(ec_*^{\pi})$, then
$ec_*^{\pi}(f_*([M])\cap\beta^n)=0$. Since  the commutative
diagram
$$
\begin{CD}
H_0^{lf,ce}(\Wi M;I(\pi)^n) @>\bar f_*>> H_0^{lf,ce}(E\pi;I(\pi)^n)\\
@Aec_*^MAA @Aec_*^{\pi}AA\\
H_0(M;I(\pi)^n) @>f_*>> H_0(B\pi;I(\pi)^n)\\
\end{CD}
$$
has isomorphisms for horizontal arrows, $ec_*^M([M]\cap
f^*(\beta^n))=0$. Thus, $ec_*^M([M])\cap ec^*_Mf^*(\beta^n)=0$.
By the Poincare Duality, $ec^*_Mf^*(\beta^n)=0$.

4. $\Rightarrow$ 1. We show that the obstruction $o_{\Wi f}$ to the inequality $\dim_{mc}\Wi M<n$ is  zero
and apply Theorem~\ref{obstr-dim}. By Theorem~\ref{universal} there is a $\pi$-homomorphism $\xi:I(\pi)^n\to L=\pi_{n-1}(B\pi^{(n-1)})$ such that $\xi^*(\beta^n)=\kappa_1$ and $\xi^*(f^*(\beta^n))=o_f$ where $\kappa_1$ is the primary obstruction to retract $B\pi$ onto $B\pi^{(n-1)}$ and $o_f$ is the primary obstruction do deform $f$ into $B\pi^{(n-1)}$. Then
$o_{\Wi f}=ec_M^*(o_f)=\xi^*ec_M^*(\beta^n)=0$.
\end{proof}
\begin{cor}\label{small homology}
For every finitely presented group $\pi$ and every $n$ there is a subgroup $H^{sm}_n(B\pi)\subset H_n(B\pi;\mathbb Z)$ of $md$-small classes such that

(1) If there is an $md$-small orientable manifold $M$ with a classifying map $f:M\to B\pi$, then $f_*([M])\in H^{sm}_*(B\pi)$;

(2) If a class $\alpha\in H^{sm}_*(B\pi)$ is the image $\alpha=f_*([M])$ of the fundamental class of a manifold $M$ for a classifying map $f:M\to B\pi$, then $M$ is $md$-small.
\end{cor}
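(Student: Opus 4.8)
The plan is to take $H^{sm}_n(B\pi)$ to be exactly the kernel of the equivariant coarsening homomorphism. Concretely, set
$$
H^{sm}_n(B\pi):=\ker\bigl(ec_*^{\pi}\colon H_n(B\pi;\mathbb Z)\longrightarrow H_n^{lf,ce}(E\pi;\mathbb Z)\bigr),
$$
and let $H^{sm}_*(B\pi)=\bigoplus_n H^{sm}_n(B\pi)$. Since $ec_*^{\pi}$ is a homomorphism of abelian groups, this is automatically a subgroup of $H_n(B\pi;\mathbb Z)$. First I would check that the definition is unambiguous: by the discussion in Section~3 the coarsely equivariant locally finite homology $H_*^{lf,ce}(\Wi K;\mathbb Z)$ is independent of the CW structure on $K$, the coarsening homomorphism $ec_*^K$ is natural with respect to cellular maps inducing isomorphisms on $\pi_1$, and $H_*^{lf,ce}$ is defined purely combinatorially, so it does not see the chosen proper geodesic metric on $B\pi$. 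Hence $H^{sm}_n(B\pi)$ depends only on $\pi$ and $n$, not on the chosen locally finite model of $B\pi$.

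With this definition, both assertions follow at once from Theorem~\ref{small}. For (1): if $M$ is an $md$-small closed orientable $n$-manifold and $f\colon M\to B\pi$ is a classifying map, then the implication $1\Rightarrow 3$ of Theorem~\ref{small} gives $f_*([M])\in\ker(ec_*^{\pi})=H^{sm}_n(B\pi)$; since $md$-smallness is a property of $M$ alone, this conclusion is independent of the choice of classifying map. For (2): if $\alpha\in H^{sm}_n(B\pi)$ is realized as $\alpha=f_*([M])$ for the fundamental class of a closed orientable $n$-manifold $M$ and a classifying map $f$, then $f_*([M])=\alpha\in\ker(ec_*^{\pi})$, so the implication $3\Rightarrow 1$ of Theorem~\ref{small} shows $M$ is $md$-small.

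Thus the corollary is essentially a repackaging of Theorem~\ref{small}, making precise the slogan that $md$-smallness of a manifold over $B\pi$ depends only on the image of its fundamental class. The only step demanding real attention is the well-definedness and invariance of $H^{sm}_n(B\pi)$, i.e. the naturality and structural independence of $ec_*^{\pi}$ recalled in Section~3; beyond that bookkeeping I do not anticipate an obstacle.
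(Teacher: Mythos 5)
Your proposal is correct and coincides exactly with the paper's own proof: the paper also defines $H^{sm}_*(B\pi)=\ker(ec^{\pi}_*)$ and lets both assertions follow from the equivalence $1\Leftrightarrow 3$ of Theorem~\ref{small}. The extra remarks on well-definedness are sensible but are not part of the argument the paper records.
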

\begin{proof}
We define $H^{sm}_*(B\pi)=ker (ec^{\pi}_*)$.
\end{proof}
For the class of MD-large manifolds similar theorem was proven in~\cite{Dr}.

\

Corollary~\ref{small homology} states that the property for manifolds to be $md$-small is a group homology property. This result is in the spirit of the results of Brunnbauer and Hanke~\cite{BH} where the notion of the small homology subgroup $H_*^{sm}(\pi)\subset H_*(\pi)$
for a given class of large manifolds was first introduced. They proved similar results for several classes of large manifolds. 
The major difference is that all their results are rational and deal with the rational homology $H_*(\pi;\mathbb Q)$.
It is an open question if Corollary~\ref{small homology} holds true rationally. In particular, it is unknown
whether all torsion classes in $H_*(\pi)$ are small. A related question is
\begin{question}
Suppose that the connected sum $M\# M$ of a manifold $M$ with itself is $md$-small. Does it follow that $M$ is $md$-small?
\end{question}

\begin{thm}
$M$ is md-small if and only if $M\times S^1$ is md-small.
\end{thm}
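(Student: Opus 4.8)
The plan is to prove both implications by relating the $md$-smallness of $M$ and of $M \times S^1$ through the homological criterion of Theorem~\ref{small}, specifically condition (3): $M$ is $md$-small if and only if $f_*([M]) \in \ker(ec_*^\pi)$, where $f : M \to B\pi$ classifies the universal cover and $\pi = \pi_1(M)$. Write $\pi' = \pi_1(M \times S^1) = \pi \times \Z$, and note that $B\pi' = B\pi \times S^1$ can be chosen as a locally finite complex with universal cover $E\pi' = E\pi \times \R$. The classifying map of $M \times S^1$ is $f \times \id_{S^1}$, and $[M \times S^1] = [M] \times [S^1]$ under the Künneth-type decomposition of the fundamental class.

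First I would handle the forward direction ($M$ md-small $\Rightarrow M \times S^1$ md-small), which I expect to be the easy half. Using Corollary~\ref{2.4}, pick a uniformly cobounded coarsely Lipschitz cellular map $g : \Wi M \to E\pi^{(n-1)}$ with compact image in $B\pi$. Then the product $g \times \id_\R : \Wi M \times \R \to E\pi^{(n-1)} \times \R$ lands in the $n$-skeleton $(E\pi \times \R)^{(n)} = E\pi' ^{(n)}$ of $E\pi'$ (since $E\pi^{(n-1)} \times \R$ has a CW structure of dimension $n$), is still coarsely Lipschitz, and is still uniformly cobounded — the preimage of a point is (bounded set) $\times$ (point), hence bounded. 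This shows $\dim_{mc}(\Wi M \times \R) < n+1$, i.e.\ $M \times S^1$ is $md$-small. Alternatively, and perhaps more cleanly, one argues on the homology side: $ec_*^{\pi'}((f\times\id)_*([M\times S^1])) = ec_*^{\pi'}(f_*([M]) \times [S^1])$, and since the equivariant coarsening homomorphism is natural and compatible with the $\times [S^1]$ cross product (the locally finite fundamental class of $\R$ being itself, and $ec_*^\R$ an isomorphism for the $\Z$-line), this equals $(ec_*^\pi f_*([M])) \times [\R] = 0$.

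The reverse direction ($M \times S^1$ md-small $\Rightarrow M$ md-small) is the main obstacle, since one must \emph{remove} a circle factor rather than add one. The strategy is again homological via Theorem~\ref{small}(3): assume $(f \times \id)_*([M] \times [S^1]) \in \ker(ec_*^{\pi'})$ and deduce $f_*([M]) \in \ker(ec_*^\pi)$. The key is to establish a cross-product decomposition
$$
H_n^{lf,ce}(\Wi M \times \R; \Z) \;\cong\; H_n^{lf,ce}(\Wi M;\Z) \oplus \big(H_{n-1}^{lf,ce}(\Wi M;\Z) \otimes H_1^{lf,ce}(\R;\Z)\big)
$$
compatible with the analogous decomposition of $H_n(M \times S^1;\Z)$ via $ec_*$ on both sides — essentially a coarse/locally-finite Künneth formula in which crossing with the coarsely-trivial line $\R$ behaves like crossing with the fundamental cycle of $S^1$ up to the coarsening. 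Under this compatibility, $ec_*^{\pi'}(f_*([M]) \times [S^1]) = 0$ forces the component $ec_*^\pi(f_*([M])) \times [\R] = 0$ in $H_{n}^{lf,ce}(E\pi \times \R)$; since crossing with $[\R]$ (the generator of $H_1^{lf}(\R) \cong \Z$, on which $ec_*^\R$ is an isomorphism) is injective on that summand, we conclude $ec_*^\pi(f_*([M])) = 0$, i.e.\ $M$ is $md$-small.

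The hard part is thus proving the coarse Künneth splitting with the stated $ec_*$-compatibility: one must verify that the coarsely equivariant locally finite chain complex $C_*^{lf,ce}(\Wi M \times \R)$ decomposes as the tensor product of $C_*^{lf,ce}(\Wi M)$ with $C_*^{lf}(\R)$ up to coarse equivalence — the subtlety being that ``coarsely equivariant'' for the $\pi \times \Z$ action must be reconciled with ``coarsely equivariant for $\pi$, crossed with $\R$'', using that the extra $\Z$-translation acts properly on the line so that finitely-generated-subgroup conditions are preserved under the splitting. Once this algebraic fact is in place (it parallels the proof in the $\dim_{MC}$ setting from~\cite{Dr}), both implications follow, and I would phrase the final argument purely through Theorem~\ref{small}(3).
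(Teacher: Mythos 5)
Your overall strategy matches the paper's: reduce to the homological criterion of Theorem~\ref{small} and argue that crossing with the line $\mathbb{R}$ (equivalently $S^1$) is harmless. Your forward direction, both in its geometric form (take a product map into $E\pi^{(n-1)}\times\mathbb{R}$, which is $n$-dimensional) and in its homological form, is fine. The problem is in the reverse direction, where you flag a ``coarse K\"unneth splitting'' for $H^{lf,ce}_*(\Wi M\times\mathbb{R};\mathbb{Z})$ as the ``hard part,'' leave it as an unproved step, and suggest it would require reconciling coarse equivariance for $\pi\times\mathbb{Z}$ with coarse equivariance for $\pi$ crossed with $\mathbb{R}$.

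That step is not hard at all, and the reason it is not hard is exactly the observation you are missing. The criterion is applied with coefficients in the \emph{trivial, finitely generated} module $\mathbb{Z}$, and for such coefficients the coarsely equivariant condition on chains is vacuous: any subset of $\mathbb{Z}$ generates a finitely generated subgroup. Hence $H^{lf,ce}_*(\,\cdot\,;\mathbb{Z})=H^{lf}_*(\,\cdot\,;\mathbb{Z})$, a fact the paper states explicitly (see the remark after the definition of $H^*_{ce}$, and the line in the proof of Theorem~\ref{ex}: ``The coarsely equivariant homology group $H^{lf,ce}_*(E\pi;\mathbb{Z})$ coincides with $H^{lf}_*(E\pi;\mathbb{Z})$.''). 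Your ``coarse K\"unneth'' is therefore just the ordinary suspension isomorphism $H^{lf}_n(X;\mathbb{Z})\cong H^{lf}_{n+1}(X\times\mathbb{R};\mathbb{Z})$, with no reconciliation of group actions needed. This is precisely how the paper proceeds: it invokes condition (2) of Theorem~\ref{small}, which is already phrased in ordinary locally finite homology ($\Wi f_*([\Wi M])=0$ in $H^{lf}_n(E\pi;\mathbb{Z})$), sets up the commutative square whose vertical arrows are the suspension isomorphisms for $\Wi M\leadsto\Wi M\times\mathbb{R}$ and $E\pi\leadsto E\pi\times\mathbb{R}$, and reads off that $\Wi f_*([\Wi M])=0$ iff $(\Wi f\times 1)_*([\Wi M\times\mathbb{R}])=0$. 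Once you recognize that $H^{lf,ce}_*(\,\cdot\,;\mathbb{Z})=H^{lf}_*(\,\cdot\,;\mathbb{Z})$, your argument collapses to the same two lines, and the ``parallel to the $\dim_{MC}$ case'' machinery you were anticipating is unnecessary.
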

\begin{proof} Let $n=\dim M$ and let $\Wi f:\Wi M\to E\pi$ denote a lift to the universal covers of a classifying map $f:M\to B\pi$.
In the following commutative diagram
$$
\begin{CD}
H^{lf}_n(\Wi M;\mathbb Z) @>{\Wi f_*}>> H^{lf}_n(E\pi;\mathbb Z)\\
@VVV @VVV\\
H^{lf}_{n+1}(\Wi M\times\mathbb R;\mathbb Z) @>{(\Wi f\times 1)_*}>> H^{lf}_{n+1}(E\pi\times\mathbb R;\mathbb Z)\\
\end{CD}
$$
the vertical arrows are the suspension isomorphisms. Then $\Wi f_*=0$ if and only if $(\Wi f\times 1)_*=0$  and the result follows from Theorem~\ref{small} and the fact that $ec^{\pi}_*f_*([M])=\Wi f_*ec^M_*([M])$.
\end{proof}

\section{MD-large manifolds that are $md$-small}

A similar to Theorem~\ref{small} criterion for manifolds to be MD-small  was proven in~\cite{Dr}:
\begin{thm}\label{Small}
For a closed oriented $n$-manifold $M$ with a classifying the universal cover map $f:M\to B\pi$ the following are equivalent:

1. $M$ is MD-small;

2. $f_*([M])\in ker(pert_*^{\pi})$;

3. $f^*(\beta^n)\in ker(pert^*_M)$ where $\beta$ is the Berstein-Schwarz class.
\end{thm}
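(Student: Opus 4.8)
The plan is to transcribe the proof of \theoref{small}, replacing the coarsely equivariant objects by their almost equivariant analogs throughout: the coarsely equivariant (co)homology $H^*_{ce}(\Wi M;-)$, $H^{lf,ce}_*(\Wi M;-)$ by the almost equivariant theory $H^*_{ae}(\Wi M;-)$, $H^{lf,ae}_*(\Wi M;-)$; the equivariant coarsening homomorphisms $ec^*_M$, $ec^{\pi}_*$ by the perturbation homomorphisms $pert^*_M$, $pert^{\pi}_*$; and ``coarsely Lipschitz'' by ``Lipschitz'', which on the universal cover of a finite complex is the same as ``bounded''. The inputs needed are the almost equivariant counterparts --- all available from~\cite{Dr} and~\cite{Dr2} --- of the results used for \theoref{small}: naturality of the maps induced on $H^*_{ae}$ and $H^{lf,ae}_*$ by bounded cellular maps and homotopies (the analogs of \propref{induced2} and \propref{h-induced}), the Poincar\'e duality square (the analog of \propref{PD}), compatibility of $pert^{\pi}_*$ with the cap product, the identification of $pert^M_*([M])$ with the fundamental class of $\Wi M$ in $H^{lf,ae}_n(\Wi M;\mathbb Z)$, and the $\dim_{MC}$-analog of \theoref{obstr-dim}. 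With these in hand the proof is the same diagram chase as for \theoref{small}.

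For $1\Rightarrow 2$: when $M$ is MD-small, \theoref{st-C} (together with a cellular approximation) gives a bounded cellular homotopy of $\Wi f$ into $E\pi^{(n-1)}$; such a map out of the universal cover of a finite complex is almost equivariant (the analog of \corref{uniform=almost}), so $\Wi f_*$ on $H^{lf,ae}_n$ factors through $H^{lf,ae}_n(E\pi^{(n-1)})=0$, giving $pert^{\pi}_*(f_*([M]))=\Wi f_*(pert^M_*([M]))=0$. For $2\Rightarrow 3$: given $f_*([M])\in\ker(pert^{\pi}_*)$, cap it with $\beta^n\in H^n(\pi;I(\pi)^n)$; compatibility of $pert^{\pi}_*$ with the cap product together with the projection formula yield $pert^{\pi}_*(f_*([M]\cap f^*(\beta^n)))=0$, and since the lift $\bar f_*$ is an isomorphism on $H^{lf,ae}_0(-;I(\pi)^n)$ (being induced by a classifying map), $pert^M_*([M]\cap f^*(\beta^n))=0$; the almost equivariant Poincar\'e duality square then forces $pert^*_M(f^*(\beta^n))=0$. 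For $3\Rightarrow 1$: by \theoref{universal} pick a $\pi$-homomorphism $\xi:I(\pi)^n\to L=\pi_{n-1}(B\pi^{(n-1)})$ with $\xi^*(\beta^n)=\kappa_1$, so that $\xi^*(f^*(\beta^n))=f^*(\kappa_1)=o_f$, the primary obstruction to deforming $f$ into the $(n-1)$-skeleton; then the almost equivariant obstruction $pert^*_M(o_f)=\xi^*(pert^*_M(f^*(\beta^n)))=0$, which by the $\dim_{MC}$-analog of \theoref{obstr-dim} gives $\dim_{MC}\Wi M<n$.

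The transcription is routine given~\cite{Dr} and~\cite{Dr2}; the one feature that genuinely distinguishes the almost equivariant setting from the coarsely equivariant one is that the cochain group $C^*_{ae}(\Wi M,L)$ is smaller than the coarsely equivariant cochains used for \theoref{small}, so one must keep everything \emph{bounded} rather than merely coarsely Lipschitz --- which is exactly what the MD-version of the hypothesis provides through \theoref{st-C} and what makes the analog of \corref{uniform=almost} applicable at step $1\Rightarrow 2$ (note also that the cellular approximation there stays bounded only because $E\pi$ is a uniform complex). So I expect this step to be the only one deserving attention; the remaining implications are the formal manipulations displayed above.
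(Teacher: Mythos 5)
The paper does not prove Theorem~\ref{Small} but cites it from~\cite{Dr}; your proposal reconstructs it by transcribing the proof of Theorem~\ref{small} into the almost equivariant setting, which is precisely the parallel the paper's machinery was built to express (cf.\ the introductory remark that the coarsely equivariant theory of \S3--5 mirrors the $\dim_{MC}$ theory of~\cite{Dr},\cite{Dr2}). Your diagram chases for $2\Rightarrow 3$ and $3\Rightarrow 1$ match the paper's $3\Rightarrow 4$ and $4\Rightarrow 1$ for Theorem~\ref{small}, and you correctly isolate the one genuine point of difference at $1\Rightarrow 2$ — that one must produce a \emph{bounded} (rather than merely coarsely Lipschitz) cellular homotopy so that the almost-equivariant analogue of Corollary~\ref{uniform=almost} applies, which is exactly what Theorem~\ref{st-C} supplies in the MD case — so the proposal is correct and follows the intended approach.
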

A group $\pi$ is called {\em geometrically finite} if there is a finite Eilenberg-Mclane complex $K(\pi,1)$.
We recall that a group $\pi$ is called a {\em duality group}~\cite{Br} 
if there is a module $D$ such that
$$
H^i(\pi,M)\cong H_{m-i}(\pi,M\otimes D)
$$
for all $\pi$-modules $M$ and all $i$ where $m=cd(\pi)$.

\begin{thm}\label{duality}
Let $\pi$ be a geometrically finite duality group. Then $H^{lf}_i(E\pi;\mathbb Z)=0$ for all $i\ne cd(\pi)$.
\end{thm}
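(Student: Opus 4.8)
The statement to prove is that for a geometrically finite duality group $\pi$, the locally finite homology $H^{lf}_i(E\pi;\mathbb Z)$ vanishes for all $i\ne cd(\pi)=m$. The natural approach is to identify $H^{lf}_*(E\pi;\mathbb Z)$ with the cohomology with compact supports $H^*_c(E\pi;\mathbb Z)$, which in the equivariant/group-cohomology picture is exactly $H^*(\pi;\mathbb Z\pi)$. Indeed, since $E\pi$ is a locally finite CW complex (because $\pi$ is geometrically finite, we may take $B\pi$ to be a finite complex and $E\pi$ its universal cover, a locally finite, uniformly contractible complex), locally finite homology in degree $i$ is dual to compactly supported cohomology, and Poincar\'e-style duality on the cellular chain level identifies $C^{lf}_i(E\pi;\mathbb Z)$ with $\mathrm{Hom}(C^i_c(E\pi),\mathbb Z)$ up to the usual universal-coefficient considerations; but cleanest is the standard fact $H^{lf}_i(E\pi;\mathbb Z)\cong H^{n-i}_{c}$ only when $E\pi$ is a manifold, which it need not be. So instead I would argue directly: the cellular chain complex $C_*(E\pi)$ is a complex of free $\mathbb Z\pi$-modules which is finitely generated in each degree (local finiteness + cocompactness), and $C^{lf}_*(E\pi;\mathbb Z)=\mathrm{Hom}_{\mathbb Z\pi}(C_{-*}(E\pi),\mathbb Z\pi)$ after regrading, so that $H^{lf}_i(E\pi;\mathbb Z)\cong H^i(\pi;\mathbb Z\pi)$.

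Once that identification is in place, the result is immediate from the definition of a duality group: $\pi$ is a duality group of dimension $m=cd(\pi)$ precisely when $H^i(\pi;\mathbb Z\pi)=0$ for $i\ne m$ (and is a nonzero torsion-free abelian group, the dualizing module $D$, for $i=m$); this is one of the standard equivalent characterizations (Bieri--Eckmann), and in fact the hypothesis as stated in the excerpt ($H^i(\pi,M)\cong H_{m-i}(\pi,M\otimes D)$ for all $M$) gives it directly by taking $M=\mathbb Z\pi$, since $H_{m-i}(\pi;\mathbb Z\pi\otimes D)=H_{m-i}(\pi;\mathbb Z\pi)\otimes D$ wait — more carefully, with $M=\mathbb Z\pi$ one gets $H^i(\pi;\mathbb Z\pi)\cong H_{m-i}(\pi;\mathbb Z\pi\otimes D)$, and $H_*(\pi;\mathbb Z\pi\otimes D)$ vanishes above degree $0$ (free module is $H_*$-acyclic) and equals $D$ in degree $0$, so $H^i(\pi;\mathbb Z\pi)=0$ for $i<m$; combined with $i\le cd(\pi)=m$ this kills everything except $i=m$.

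So the skeleton of the write-up is: (i) since $\pi$ is geometrically finite, fix a finite $B\pi$, so $E\pi$ is locally finite and cocompact, and $C_*(E\pi)$ is a free resolution of $\mathbb Z$ over $\mathbb Z\pi$ by finitely generated modules; (ii) observe $C^{lf}_i(E\pi;\mathbb Z)=\mathrm{Hom}_{\mathbb Z\pi}(C_i(E\pi),\mathbb Z\pi)$ (a locally finite invariant chain $\sum\lambda_e e$ with $\lambda_{ge}=g\lambda_e$ is the same data as a $\mathbb Z\pi$-homomorphism $C_i(E\pi)\to\mathbb Z\pi$, using finite generation to see the chain is genuinely locally finite), and this is compatible with differentials, whence $H^{lf}_i(E\pi;\mathbb Z)\cong H^i(\pi;\mathbb Z\pi)$; (iii) invoke the duality-group hypothesis with coefficient module $M=\mathbb Z\pi$ to get $H^i(\pi;\mathbb Z\pi)\cong H_{m-i}(\pi;\mathbb Z\pi\otimes D)$, which is $0$ for $i\ne m$ because $\mathbb Z\pi\otimes_{\mathbb Z} D$ is an induced (hence homologically trivial in positive degrees) module and $i\le m$ always.

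\textbf{Main obstacle.} The only real subtlety is step (ii): making precise the identification of the locally finite equivariant chain complex with $\mathrm{Hom}_{\mathbb Z\pi}(C_*(E\pi),\mathbb Z\pi)$ and checking the signs/regrading are harmless, and in particular verifying that the local finiteness built into the definition of $C^{lf}_*$ is automatic here (it is, since $E\pi$ is locally finite) so that no extra direct-limit subtlety enters. Everything after that is formal. One should also note in passing that $cd(\pi)=m<\infty$ since $B\pi$ is finite, so the homology lives in degrees $0\le i\le m$ to begin with, which is why step (iii) only needs to rule out $i<m$.
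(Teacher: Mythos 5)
The approach is right in spirit — relate $H^{lf}_*(E\pi;\mathbb Z)$ to $H^*(\pi;\mathbb Z\pi)=H^*_c(E\pi;\mathbb Z)$ and then invoke the duality-group vanishing — and your step (iii) deriving $H^i(\pi;\mathbb Z\pi)=0$ for $i\ne m$ from the definition is fine. But step (ii), the key identification
\[
C^{lf}_i(E\pi;\mathbb Z)\ \cong\ \Hom_{\mathbb Z\pi}\bigl(C_i(E\pi),\mathbb Z\pi\bigr),
\]
is false, and that is exactly where the proof breaks. The right-hand side is the group of $\mathbb Z\pi$-linear maps $C_i(E\pi)\to\mathbb Z\pi$; since $C_i(E\pi)$ is a finitely generated free $\mathbb Z\pi$-module, this is (non-canonically) just $C_i(E\pi)$ again, i.e.\ a countable free abelian group, and under the usual adjunction it is identified with the group $C^i_c(E\pi;\mathbb Z)=\bigoplus_{e}\mathbb Z$ of \emph{compactly supported} cochains. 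The left-hand side is the group of \emph{all} locally finite $i$-chains $\sum_e\lambda_e e$ with $\lambda_e\in\mathbb Z$, which is the full direct product $\prod_e\mathbb Z$; for infinite $\pi$ this is uncountable. Concretely, the map you describe (read off $\lambda_e$ from the coefficient of $1\in\pi$ in $\phi(e)$) is injective but not surjective: the constant chain $\sum_e e$ is locally finite but does not come from any $\phi\in\Hom_{\mathbb Z\pi}(C_i(E\pi),\mathbb Z\pi)$, since $\phi$ would have to send each orbit representative to an infinite element of $\mathbb Z\pi$. (Also, the ``invariance'' condition $\lambda_{ge}=g\lambda_e$ you mention is vacuous for the trivial $\mathbb Z$-module and is not part of the definition of $H^{lf}_*(E\pi;\mathbb Z)$ in the statement.)

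What \emph{is} true is the relation you briefly floated and then abandoned: $C^{lf}_i(E\pi;\mathbb Z)=\Hom(C^i_c(E\pi;\mathbb Z),\mathbb Z)$, so that one passes from $H^*_c$ to $H^{lf}_*$ by dualizing, at the cost of $\mathrm{Ext}$-terms from a universal-coefficient argument. This is exactly how the paper proceeds: it identifies $H^{lf}_i(E\pi;\mathbb Z)$ with the Steenrod homology $H^s_i(\alpha(E\pi);\mathbb Z)$ of the one-point compactification and then uses the short exact sequence
\[
0\to \mathrm{Ext}(H^{i+1}_c(E\pi),\mathbb Z)\to H^{lf}_i(E\pi;\mathbb Z)\to \Hom(H^i_c(E\pi),\mathbb Z)\to 0,
\]
together with $H^i_c(E\pi;\mathbb Z)=H^i(\pi;\mathbb Z\pi)$. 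This route requires an ingredient your argument does not invoke: the fact that $H^m(\pi;\mathbb Z\pi)$ is \emph{free abelian} (Brown, Ch.\ VIII, Thm.\ 10.1). Without that, the $\mathrm{Ext}$-term in degree $m-1$ need not vanish, and the theorem would not follow. So the gap is genuine: the shortcut $H^{lf}_i\cong H^i(\pi;\mathbb Z\pi)$ is false as stated, and the correct argument through compactly supported cohomology needs both the universal-coefficient sequence and the freeness of the dualizing module.
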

\begin{proof} 
From Theorem 10.1, Chapter 8 of~\cite{Br} it follows that $H^i(\pi,\mathbb Z\pi)=0$ for $i\ne m= cd(\pi)$ and
$H^m(\pi,\mathbb Z\pi)$ is a free abelian group. In view of the equality $H^i(\pi,\mathbb Z\pi)=H_c^i(E\pi;\mathbb Z)$
for geometrically finite groups
(see~\cite{Br} Theorem 7.5, Chapter 8) and the short exact sequence for the Steenrod homology of a compact metric space
$$
0\to Ext(H^{i+1}(X),\mathbb Z)\to H^s_i(X;\mathbb Z)\to Hom(H^i(X),\mathbb Z)\to 0
$$
applied to the one point compactification $\alpha(E\pi)$ of $E\pi$ we obtain that $H^s_i(\alpha(E\pi);\mathbb Z)=0$ for
$i< m$. The equality $H^{lf}_i(E\pi;\mathbb Z)=H^s_i(\alpha(E\pi);\mathbb Z)$ completes the proof.
\end{proof}

\begin{thm}\label{ex}
Every rationally essential $n$-manifold  $M$ whose fundamental group $\pi$ is a geometrically finite amenable duality group with $cd(\pi)>n$  is  md-small and MD-large.
\end{thm}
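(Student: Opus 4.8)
The plan is to verify the two assertions separately, using the characterizations developed earlier. Recall that $M$ being \emph{MD-large} means $\dim_{MC}\Wi M = n$, and \emph{md-small} means $\dim_{mc}\Wi M < n$; these are compatible with the strict inequality in $(*)$, so the theorem is not vacuous.

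\emph{$M$ is md-small.} The key tool is Theorem~\ref{small}, condition~2: $M$ is md-small if and only if $\Wi f_*([\Wi M]) = 0$ in $H^{lf}_n(E\pi;\mathbb Z)$. Since $\pi$ is a geometrically finite amenable duality group, Theorem~\ref{duality} applies, giving $H^{lf}_i(E\pi;\mathbb Z) = 0$ for all $i \ne cd(\pi)$. The hypothesis $cd(\pi) > n$ forces $n \ne cd(\pi)$, hence $H^{lf}_n(E\pi;\mathbb Z) = 0$, and therefore $\Wi f_*([\Wi M]) = 0$ trivially. By Theorem~\ref{small} this gives $\dim_{mc}\Wi M < n$, i.e. $M$ is md-small. (Note that amenability is used here only through Theorem~\ref{duality}; in fact the duality-group hypothesis alone suffices for this half. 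The amenability is the relevant ingredient for the MD-large half — see below.)

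\emph{$M$ is MD-large.} Here I would argue by contradiction. Suppose $\dim_{MC}\Wi M < n$, i.e. $M$ is MD-small. By Theorem~\ref{Small}, condition~2, this means $f_*([M]) \in \ker(pert_*^{\pi})$, where $pert_*^\pi : H_n(B\pi;\mathbb Z) \to H_n^{lf,ae}(E\pi;\mathbb Z)$ is the perturbation homomorphism into the almost-equivariant homology. The crucial point is that $M$ is \emph{rationally essential}, so $f_*([M]) \ne 0$ in $H_n(B\pi;\mathbb Q)$, hence $f_*([M])$ has infinite order in $H_n(B\pi;\mathbb Z)$. The main obstacle — and the place where amenability enters — is to show that $pert_*^\pi$ is \emph{rationally injective} (or at least injective on the relevant class) for an amenable group $\pi$. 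The expected mechanism: for amenable $\pi$ there is a Følner-type averaging argument showing that the transfer/coarsening map does not kill rational homology classes, analogous to the classical fact that for amenable groups the comparison between $\ell_\infty$- and ordinary (co)homology is well-behaved, or equivalently that bounded cohomology with $\mathbb R$ coefficients vanishes so that the relevant long exact sequences degenerate the way one needs. Concretely, one uses a Følner exhaustion of $\pi$ to average an almost-equivariant chain representing $pert_*^\pi(f_*[M])$ back to an honestly equivariant one, showing the kernel of $pert_*^\pi$ is torsion. Since $f_*([M])$ has infinite order, it cannot lie in this kernel, contradicting MD-smallness. Therefore $\dim_{MC}\Wi M = n$.

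I expect the first half (md-smallness) to be essentially immediate once Theorem~\ref{duality} is in hand, with the only subtlety being the bookkeeping that $n$ genuinely differs from $cd(\pi)$ — which is exactly what $cd(\pi) > n$ guarantees. The real work is the second half: establishing that amenability of $\pi$ implies $\ker(pert_*^{\pi})$ (or $\ker(ec_*^\pi)$ at the level relevant to MD-smallness) contains no rationally nontrivial class. This is where a Følner-averaging argument, or an appeal to vanishing of bounded cohomology / $\ell_\infty$-cohomology of amenable groups in the sense of Gersten, should be invoked; it is the technical heart of the theorem and the step most likely to require care about which coefficient module and which variant of the coarsening map one is averaging over.
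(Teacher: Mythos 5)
Your proof of the md-small half is correct and follows the same route as the paper: $cd(\pi)>n$ plus Theorem~\ref{duality} forces $H^{lf}_n(E\pi;\mathbb Z)=0$, and then Theorem~\ref{small}(2) finishes it. (One small remark: Theorem~\ref{duality} does not invoke amenability at all, so your parenthetical about "amenability being used through Theorem~\ref{duality}" is misplaced, though you correct yourself immediately.)

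For the MD-large half you have identified exactly the right mechanism — amenability must be used to show that the perturbation map $pert_*^\pi$ cannot kill the rationally nontrivial class $f_*([M])$ — but your packaging (a F\o lner averaging of an almost-equivariant chain back to an equivariant one) is a different, and considerably less tidy, route than the paper's. The paper instead identifies $pert_*^\pi$ with the coefficient homomorphism
$$i_*:H_*(\pi;\mathbb Z)\to H_*(\pi;\ell^\infty(\pi,\mathbb Z))$$
induced by the inclusion of constants $\mathbb Z\hookrightarrow\ell^\infty(\pi,\mathbb Z)$, and then notes that for amenable $\pi$ the invariant mean gives a $\pi$-equivariant splitting of $\mathbb R\hookrightarrow\ell^\infty(\pi,\mathbb R)$, so the real-coefficient version $\bar i_*$ is (split) injective; a commutative square then shows $pert_*^\pi(f_*[M])\neq 0$ whenever $f_*([M])\neq 0$ in $H_n(\pi;\mathbb R)$, i.e.\ whenever $M$ is rationally essential. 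That reformulation buys you two things your F\o lner sketch leaves murky: (i) it avoids the question of what it even means to "average a chain over a F\o lner set and pass to a limit" while staying inside a chain complex, and (ii) it makes the passage to $\mathbb R$-coefficients (needed because the mean produces real, not integer, values) entirely transparent. Your sketch is morally right but would need the identification of $pert_*^\pi$ with $i_*$ — or some equivalent — before the averaging step could be made precise; as written it asserts what the argument must accomplish ("the kernel of $pert_*^\pi$ is torsion") rather than proving it.
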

\begin{proof}
It was proven in~\cite{Dr2}, Theorem 7.6, that a rationally essential manifold with an amenable fundamental group is
MD-large. Here we present a sketch of an alternative proof of that theorem for geometrically finite groups suggested to me by M. Marcinkowski. 
He  noticed that
the perturbation homomorphism $pert^{\pi}_*:H_*(\pi,\mathbb Z)\to H_*^{lf,ae}(E\pi;\mathbb Z)$ can be identified with the coefficient homomorphism $$i_*:H_*(\pi,\mathbb Z)\to H_*(\pi,\ell^{\infty}(\pi,\mathbb Z))$$ induced by the inclusion $i:\mathbb Z\to \ell^{\infty}(\pi,\mathbb Z)$ of  the constant functions into the $\pi$-module of all bounded functions. This homomorphism $i_*$ and the similar homomorphism 
$\bar i_*:H_*(\pi,\mathbb R)\to H_*(\pi,\ell^{\infty}(\pi,\mathbb R))$ for coefficients in $\mathbb R$ form a commutative diagram
$$
\begin{CD}
H_*(\pi,\mathbb Z) @>i_*>> H_*(\pi,\ell^{\infty}(\pi,\mathbb Z))\\
@VVV @VVV\\
H_*(\pi,\mathbb R) @>\bar i_*>> H_*(\pi,\ell^{\infty}(\pi,\mathbb R)).\\
\end{CD}
$$
In view of the amenability of $\pi$, the homomorphism $\bar i:\mathbb R\to \ell^{\infty}(\pi,\mathbb R)$ is a split injection. Therefore, $\bar i_*$ is
an injection. Since $M$ is rationally essential, $f_*([M])\ne 0$ in $H_*(\pi,\mathbb R)$. Therefore, $pert^{\pi}_*(f_*[M])\ne 0$ and hence $M$ is MD-large by Theorem~\ref{Small}.

Since $\pi$ is a duality group, we obtain $H^{lf}_n(E\pi;\mathbb Z)=0$ for $n< cd(\pi)$.
The coarsely equivariant homology group $H^{lf,ce}_*(E\pi;\mathbb Z)$ coincides with $H^{lf}_*(E\pi;\mathbb Z)$.
By Theorem~\ref{duality}, $H_n^{lf}(E\pi;\mathbb Z)=0$, therefore,  by Theorem~\ref{small}, $M$ is $md$-small.
\end{proof}

REMARK. In the definition of $\dim_{MC}$ in~\cite{Dr2}, Definition 1.1, the Lipschitz condition on $f$ was omitted by a mistake. It was used implicitly in the proof of Theorem 7.6.
\\
\\

EXAMPLE. For $\pi=\mathbb Z^m$, $m>5$, every homology class $\alpha\in H_n(B\pi;\mathbb Q)=\oplus\mathbb Q$, $n<m$  can be realized by a manifold $f:M\to B\pi$ (due to Thom's theorem). For $n>4$ applying surgery in dimension 0 and 1 we can achieve that $f$ induces isomorphism of the fundamental groups. By Theorem~\ref{ex}, $M$ is $md$-small and MD-large simultaneously.

\end{document}